\author{Guillaume Blanc}
\title{Percolation with invariant Poisson processes of lines in the $3$-regular tree}
\begin{document}

\theoremstyle{definition}
\newtheorem{prop}{Proposition}
\newtheorem{claim}{Claim}

\theoremstyle{plain}
\newtheorem{lem}{Lemma}
\newtheorem{thm}{Theorem}
\newtheorem*{thm*}{Theorem}

\theoremstyle{remark}
\newtheorem{rem}{Remark}

\maketitle

\begin{abstract}
In this paper, we study invariant Poisson processes of lines (i.e, bi-infinite geodesics) in the $3$-regular tree.
More precisely, there exists a unique (up to multiplicative constant) locally finite Borel measure on the space of lines that is invariant under graph automorphisms, and we consider two Poissonian ways of playing with this invariant measure.
First, following Benjamini, Jonasson, Schramm and Tykesson, we consider an invariant Poisson process of lines, and show that there is a critical value of the intensity below which a.s.~the vacant set of the process percolates, and above which all its connected components are finite.
Then, we consider an invariant Poisson process of roads (i.e, lines with speed limits), and show that there is a critical value of the parameter governing the speed limits of the roads below which a.s.~one can drive to infinity in finite time using the road network generated by the process, and above which this is impossible.
\end{abstract}

\begin{figure}[ht]
\centering
\includegraphics[width=0.5\linewidth]{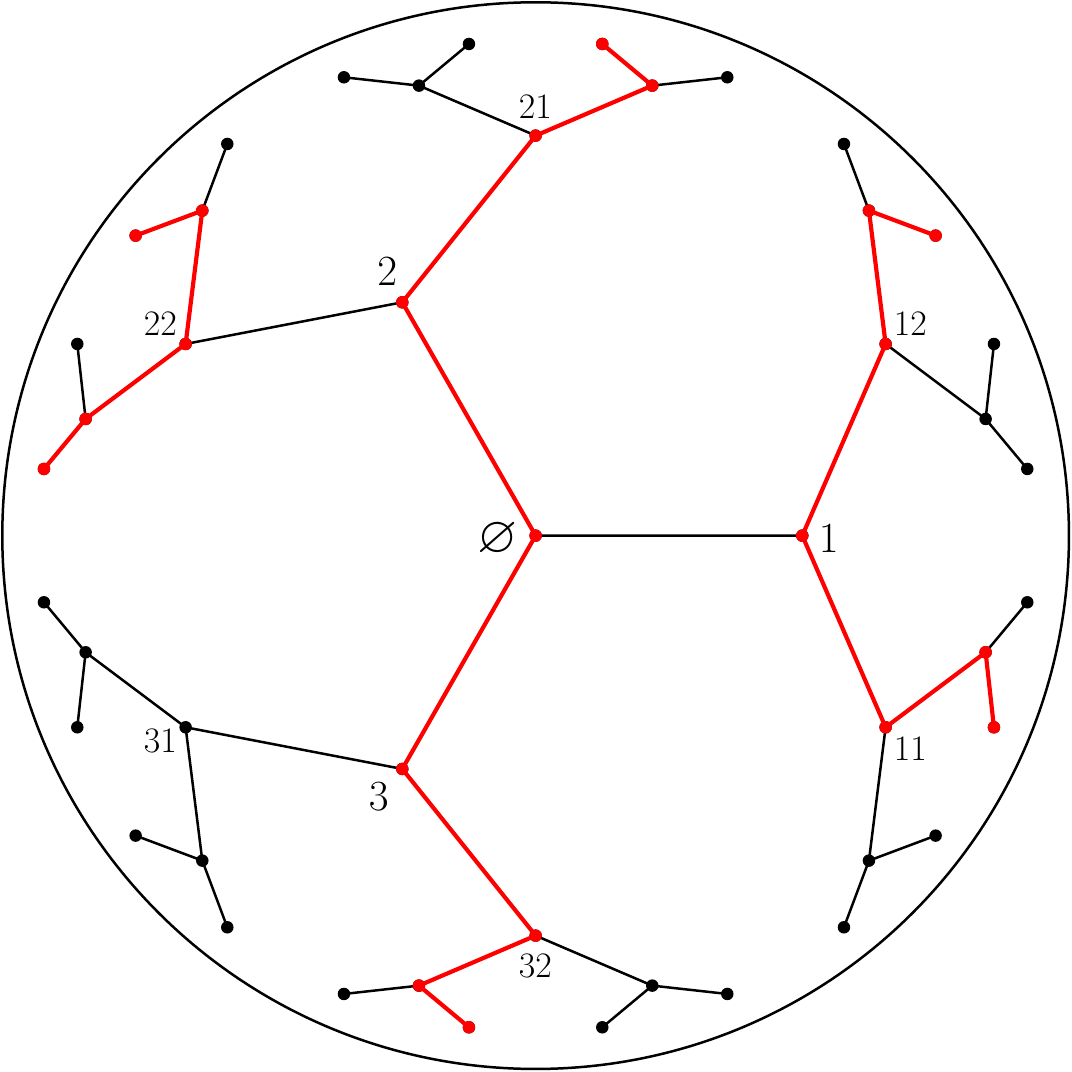}
\caption{The $3$-regular tree $\mathbb{T}$ (planar, rooted, and labeled using the Neveu notation, or Ulam--Harris labelling).
In red, we have represented a few lines in $\mathbb{T}$.}\label{fig:tree}
\end{figure}

\section*{Introduction and main results}

Let $\mathbb{T}$ be the $3$-regular tree (planar, rooted, and labeled using the Neveu notation, or Ulam--Harris labelling; as represented in Figure \ref{fig:tree}), and let $\mathbb{L}$ be the space of lines (i.e, bi-infinite geodesics) in $\mathbb{T}$.
There exists a unique (up to multiplicative constant) locally finite Borel measure on $\mathbb{L}$ that is invariant under graph automorphisms.
While this is certainly guaranteed by abstract results on Haar measures (see, e.g, \cite[Chapter 13]{stochintgeo} and references therein), we give a simple description of $\mu$ in terms of the uniform measure on the boundary of $\mathbb{T}$ (see Proposition \ref{prop:mu} below), which is analogous to the better-known case of the hyperbolic plane \cite[Section 6]{visibility}.
Then, we consider two Poissonian ways of playing with this invariant measure.
To give some extra motivation, let us take a step back and abstract the setting a little bit (while remaining informal).

\paragraph{Motivation.}
Picture a nice homogeneous metric space $X$, on which a group of isometries acts transitively.
Add to the picture a class $\mathcal{F}$ of closed subsets of $X$, stable under the action of the isometries, and assume that $\mathcal{F}$ is equipped with an isometry-invariant measure $\mu$.
For instance, if $X$ itself is equipped with an isometry-invariant Borel measure $\lambda$, then the class ${\mathcal{F}=\left\{\overline{B}(x,r)\,;\,x\in X\right\}}$ of closed balls with some fixed radius is stable under the action of the isometries, and the pushforward $\mu$ of $\lambda$ by the map $x\in X\mapsto\overline{B}(x,r)$ is an isometry-invariant measure on $\mathcal{F}$.
Consider the following concrete examples.
\begin{enumerate}[label=(\roman*)]
\item $X$ is the Euclidean lattice $\mathbb{Z}^d$, equipped with the counting measure $\lambda$, and 
\[\mathcal{F}=\left\{\{x\}\,;\,x\in\mathbb{Z}^d\right\}.\]
\item $X$ is the Euclidean space $\mathbb{R}^d$, equipped with the Lebesgue measure $\lambda$, and 
\[\mathcal{F}=\left\{\overline{B}(x,1)\,;\,x\in\mathbb{R}^d\right\}.\]
\newcounter{i}
\setcounter{i}{\value{enumi}}
\end{enumerate}
In settings $(X,\mathcal{F},\mu)$ such as described above, two separate problems may be considered: a percolation problem, and a driving distance problem.
\begin{itemize}
\item \textbf{Percolation.} One can take a Poisson process $\Pi$ with intensity $\alpha\cdot\mu$ on $\mathcal{F}$, where $\alpha>0$ is a parameter, and ask about the percolative properties of the trace $\bigcup_{F\in\Pi}F$ of the process; or that of its complement, the vacant set $\mathcal{V}=\left.X\middle\backslash\bigcup_{F\in\Pi}F\right.$.
\item \textbf{Driving distance.} One can take a Poisson process $\Pi$ with intensity $\mu\otimes v^{-\beta}\mathrm{d}v$ on $\mathcal{F}\times\mathbb{R}_+^*$, where $\beta>1$ is a parameter; and, viewing each atom $(F,v)$ of $\Pi$ as a ``road'' in $X$, with $v$ the speed limit on the subset $F$, consider the random metric ${T:X\times X\rightarrow\mathbb{R}_+}$ induced by the driving distance with respect to the road network generated by $\Pi$.
To be more precise, first consider the (random) speed limit function $V:X\rightarrow\mathbb{R}_+$ defined by
\[V(x)=\sup\{v\,;\,(F,v)\in\Pi:F\ni x\}\quad\text{for all $x\in X$,}\]
with the convention $\sup\emptyset=0$.
Then, define the driving distance $T(x,y)$ between $x,y\in X$ as the infimal time $T>0$ for which there exists a path $\gamma:[0,T]\rightarrow X$ from $x$ to $y$ that respects the speed limits set by $\Pi$, in the sense that
\[d(\gamma(s),\gamma(t))\leq\int_s^tV(\gamma(u))\mathrm{d}u\quad\text{for all $s,t\in[0,T]$.}\]
Equivalently, the driving distance metric $T:X\times X\rightarrow\mathbb{R}_+$ is the first passage percolation distance function associated with the random field $W:x\in X\mapsto1/V(x)$.
\end{itemize}
Note that in the case of example (i) given above, the percolation problem amounts to Bernoulli site percolation on the Euclidean lattice, while the driving distance problem broadly amounts to site first passage percolation.
In the case of example (ii), the percolation problem amounts to the continuum percolation model known as the Gilbert disk (or Boolean) model.
That being said, one can also imagine settings $(X,\mathcal{F},\mu)$ allowing long range models, where the invariant measure $\mu$ on $\mathcal{F}$ does not simply come from an invariant measure $\lambda$ on $X$.
We have in mind the following examples.
\begin{enumerate}[label=(\roman*)]
\setcounter{enumi}{\value{i}}
\item $X$ is the Euclidean space $\mathbb{R}^d$ ($d\geq2$) and $\mathcal{F}$ is the space of affines lines, equipped with its unique (up to multiplicative constant) locally finite invariant Borel measure $\mu$.

In this case, the driving distance problem has been introduced by Aldous \cite{aldous} and Kendall \cite{kendall} a few years ago.
It has been shown \cite{kendall,kahn,moa} that the driving distance metric ${T:\mathbb{R}^d\times\mathbb{R}^d\rightarrow\mathbb{R}_+}$ is well-defined for and only for $\beta>d$, and that the random metric space $\left(\mathbb{R}^d,T\right)$ is homeomorphic to the Euclidean space $\mathbb{R}^d$ and has Hausdorff dimension $(\beta-1)d/(\beta-d)>d$.
\item $X$ is the hyperbolic plane $\mathbb{H}$ and $\mathcal{F}$ is the space of lines (i.e, bi-infinite geodesics), equipped with its unique (up to multiplicative constant) locally finite invariant Borel measure $\mu$.

In this case, the percolation problem has been considered by Benjamini, Jonasson, Schramm and Tykesson \cite{visibility}.
They have shown the existence of a critical parameter $\alpha_0=1$ (this explicit value depends on the normalisation of $\mu$) such that the vacant set $\mathcal{V}$ contains lines for $\alpha<\alpha_0$, and does not contain any half-line for $\alpha\geq\alpha_0$. 
\item $X$ is the Euclidean lattice $\mathbb{Z}^d$ ($d\geq3$) and $\mathcal{F}$ is the space of bi-infinite transient paths, equipped with the invariant measure $\mu$ constructed by Sznitman \cite{sznitman}.

In this case, the percolation problem amounts to the random interlacements model introduced by Sznitman \cite{sznitman}, for which it has been shown \cite{sznitman,sidoravicius} that there exists a critical parameter $\alpha_0\in\mathbb{R}_+^*$ such that the vacant set $\mathcal{V}$ percolates for $\alpha<\alpha_0$, and has only finite connected components for $\alpha>\alpha_0$.
\end{enumerate}

\paragraph{Our setting.}
In this paper, we consider the two problems (percolation and driving distance) in the case where $X$ is the $3$-regular tree $\mathbb{T}$ and $\mathcal{F}$ is the space of lines $\mathbb{L}$, equipped with its unique (up to multiplicative constant) locally finite invariant Borel measure $\mu$.
As it turns out, a normalisation for $\mu$ can be specified by asking that for every $x\neq y\in\mathbb{T}$,
\[\mu\{\ell\in\mathbb{L}:\text{$\ell$ passes through $x$ and $y$}\}=2^{-d(x,y)},\]
where $d(x,y)$ is the graph distance between $x$ and $y$ in $\mathbb{T}$.

\subparagraph{Percolation (visibility to infinity, despite obstacles).}
Following Benjamini, Jonasson, Schramm and Tykesson \cite{visibility}, we let $\Pi$ be a Poisson process with intensity $\alpha\cdot\mu$ on $\mathbb{L}$, where $\alpha>0$ is a parameter.
We recover their result \cite[Proposition 6.1]{visibility} in this discrete setting.

\begin{thm}\label{thm:visibility}
There exists a critical parameter $\alpha_0=4\ln2$ (this explicit value depends on the normalisation of $\mu$ specified above) such that the following holds.
\begin{itemize}
\item For $\alpha<\alpha_0$, almost surely, the vacant set $\mathcal{V}$ contains a line.
\item For $\alpha\geq\alpha_0$, almost surely, the vacant set $\mathcal{V}$ does not contain any half-line.
\end{itemize}
\end{thm}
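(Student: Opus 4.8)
The plan is to reduce the question to a one‑parameter recursive distributional equation living on the branches of $\mathbb{T}$, and to solve the associated fixed‑point problem; the critical intensity will emerge as the value where a certain derivative crosses $1$. First I would record the needed $\mu$‑masses. Decomposing a line through a vertex $v$ according to which of the three pairs of edges at $v$ it uses, together with the normalisation $\mu\{\ell:x,y\in\ell\}=2^{-d(x,y)}$, gives $\mu\{\ell:v\in\ell\}=3\cdot 2^{-2}=3/4$; and for $a,b$ at distance $2$ with midpoint $m$, a line through $a$ and $b$ automatically passes through $m$, so the mass of such lines is $2^{-2}=1/4$. Now fix an oriented edge $e=(v\to w)$ and let $\mathbb{T}_{v\to w}$ be the component of $\mathbb{T}\setminus\{v\}$ containing $w$; it is a rooted tree in which $w$ has two children $w_1,w_2$ and every vertex has two children. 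Call a line \emph{confined} to $\mathbb{T}_{v\to w}$ if all its vertices lie there. Since a geodesic leaving $\mathbb{T}_{v\to w}$ must pass through $v$, the lines of $\Pi$ confined to $\mathbb{T}_{v\to w}$ split into three families which are independent by the restriction property of Poisson processes: those through $w$ (equivalently through $w_1,w,w_2$, a $\mathrm{Poisson}(\alpha/4)$ number $N_e$ of them), those confined to $\mathbb{T}_{w\to w_1}$, and those confined to $\mathbb{T}_{w\to w_2}$; and by transitivity of $\mathrm{Aut}(\mathbb{T})$ on oriented edges each sub‑family has the law of the original family.

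Let $R_e\in\{0,1\}$ be the indicator that some ray issued from $w$ inside $\mathbb{T}_{v\to w}$ is met by no line of $\Pi$ confined to $\mathbb{T}_{v\to w}$. The structure above yields $R_e=1$ if and only if $N_e=0$ and ($R_{e_1}=1$ or $R_{e_2}=1$), where $N_e,R_{e_1},R_{e_2}$ are independent and $R_{e_1},R_{e_2}$ each have the law of $R_e$; hence $\theta:=\mathbb{P}(R_e=1)$ solves $\theta=e^{-\alpha/4}(2\theta-\theta^2)$. To pin down which root is relevant I would introduce the truncations $R_e^{(n)}$ (``some vacant path of length $n$ from $w$ inside $\mathbb{T}_{v\to w}$ exists''), which obey the same recursion one step at a time, so $\mathbb{P}(R_e^{(n)}=1)=f^{\circ n}(e^{-\alpha/4})$ with $f(x)=e^{-\alpha/4}(2x-x^2)$, while $R_e^{(n)}\downarrow R_e$ as $n\to\infty$ by K\"onig's lemma (the tree of finite vacant paths from $w$ is locally finite, so if it is infinite it has an infinite branch, yielding the desired ray). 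As $f$ is increasing and strictly concave on $[0,1]$ with $f(0)=0$, $f(1)=e^{-\alpha/4}<1$, and $f'(0)=2e^{-\alpha/4}$, and as the starting point $e^{-\alpha/4}$ lies above every positive fixed point (AM--GM), the iterates decrease to $0$ when $f'(0)\le 1$, i.e.\ $\alpha\ge 4\ln 2$, and to the unique positive fixed point $2-e^{\alpha/4}$ when $f'(0)>1$, i.e.\ $\alpha<4\ln 2$. Thus $\theta>0$ precisely when $\alpha<\alpha_0:=4\ln 2$.

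It remains to transfer this to half‑lines and lines. If $\alpha\ge\alpha_0$, a vacant half‑line, read from its initial edge $e=(x_0\to x_1)$, exhibits a ray from $x_1$ inside $\mathbb{T}_{x_0\to x_1}$ met by no confined line, i.e.\ $R_e=1$; since $\theta=0$ and there are countably many oriented edges, almost surely no vacant half‑line exists (a fortiori no vacant line). If $\alpha<\alpha_0$, take the root $o$ with neighbours $w_1,w_2,w_3$: a line meeting $\mathbb{T}_{o\to w_i}$ without being confined to it passes through $o$, so on the event $\{o\ \text{vacant}\}$ (probability $e^{-3\alpha/4}$, independent of the three mutually independent events $\{R_{(o\to w_i)}=1\}$) there is a vacant line through $o$ exactly when at least two of these events occur; hence $\mathbb{P}(\exists\ \text{vacant line})\ge e^{-3\alpha/4}\,\mathbb{P}\big(\mathrm{Bin}(3,\theta)\ge 2\big)>0$. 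Finally $\{\exists\ \text{vacant line}\}$ is $\mathrm{Aut}(\mathbb{T})$‑invariant; since $\mathrm{Aut}(\mathbb{T})$ acts transitively on $\mathbb{L}$, the only invariant subsets of $\mathbb{L}$ are $\emptyset$ and $\mathbb{L}$, of $\mu$‑mass $0$ and $\infty$, so $\Pi$ is ergodic under $\mathrm{Aut}(\mathbb{T})$ by the standard criterion for Poisson processes, and the invariant event has probability $0$ or $1$, hence $1$.

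The main obstacle is the fixed‑point identification: making $R_e^{(n)}\downarrow R_e$ and the iteration $f^{\circ n}(e^{-\alpha/4})$ fully rigorous, and in particular handling the critical intensity $\alpha=\alpha_0$, where $f'(0)=1$ and it is exactly the strict concavity of $f$ that forces $f^{\circ n}(e^{-\alpha/4})\to 0$. A secondary point requiring care throughout is the bookkeeping that ``avoided by the lines confined to a branch'' coincides with ``vacant'' once the vertex separating that branch from the rest of $\mathbb{T}$ is itself vacant.
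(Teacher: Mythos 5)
Your argument is correct, and its first half is essentially the paper's proof in different clothing: the recursion $\theta=e^{-\alpha/4}(2\theta-\theta^2)$ for the branch survival probability, together with the truncations $R_e^{(n)}$ and the concavity analysis of $f$, is exactly the generating-function/extinction analysis of the Galton--Watson process with $\mathrm{Binomial}(2,e^{-\alpha/4})$ offspring that the paper builds by counting, level by level, the vertices joined to the root by a vacant geodesic (the factor $e^{-\alpha/4}=e^{-\alpha\mu\langle y1,y2\rangle}$ plays the same role in both, and your K\H{o}nig's-lemma step mirrors the paper's passage from survival of $(Z_n)$ to the existence of a vacant ray, including the critical case $\alpha=4\ln 2$). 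Where you genuinely depart from the paper is in upgrading a vacant ray to a vacant line in the supercritical phase. The paper picks a neighbour of the root through which a vacant ray passes with probability at least $\delta/3$ (pigeonhole), and then glues two such rays using the Harris--FKG inequality for the two decreasing events. You instead exploit the exact independence of the four Poisson restrictions (lines through $\varnothing$; lines confined to each of the three branches), condition on $\varnothing$ being vacant, and ask for at least two of the three independent branch events $\{R_{(\varnothing\to w_i)}=1\}$; this gives the clean lower bound $e^{-3\alpha/4}\,\mathbb{P}(\mathrm{Bin}(3,\theta)\geq2)>0$ with no correlation inequality at all, at the modest bookkeeping cost of checking that ``avoided by confined lines'' upgrades to ``vacant'' once $\varnothing$ itself is vacant (which you correctly flag and which does hold, since any non-confined line meeting a branch must pass through $\varnothing$). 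The final zero--one law is obtained by you via ergodicity of the Poisson process (no invariant set of positive finite $\mu$-measure, using transitivity of $\mathrm{Aut}(\mathbb{T})$ on $\mathbb{L}$) and by the paper via the mixing property; both are standard facts invoked without proof, so this is a wash. In short: same skeleton for the phase transition, a neater and FKG-free gluing step on your side.
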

We present this in Section \ref{sec:lines}.

\subparagraph{Driving distance.}
In Section \ref{sec:roads}, which is the core of the paper, we le $\Pi$ be a Poisson process with intensity proportional to $\mu\otimes v^{-\beta}\mathrm{d}v$ on $\mathbb{L}\times\mathbb{R}_+^*$, where $\beta>1$ is a parameter.
Viewing each atom $(\ell,v)$ of $\Pi$ as a road in $\mathbb{T}$, with $v$ the speed limit on the line $\ell$, we consider the random metric $T:\mathbb{T}\times\mathbb{T}\rightarrow\mathbb{R}_+$ induced by the driving distance with respect to the road network generated by $\Pi$ (see Section \ref{sec:roads} for a more detailed presentation of the model).
In this instance of first passage percolation with positively associated passage times, we prove that the so-called explosion phenomenon undergoes a phase transition in terms of the parameter $\beta$.
This is the main novel result of this paper.

\begin{thm}\label{thm:explosion}
The explosion phenomenon undergoes a phase transition at $\beta=2$.
\begin{itemize}
\item For $\beta<2$, almost surely, there exists an infinite geodesic path ${(\varnothing=x_0,x_1,\ldots)}$ in $\mathbb{T}$ such that ${\sum_{n\geq1}T(x_{n-1},x_n)<\infty}$.
\item For $\beta>2$, almost surely, for every infinite geodesic path ${(\varnothing=x_0,x_1,\ldots)}$ in $\mathbb{T}$, we have ${\sum_{n\geq1}T(x_{n-1},x_n)=\infty}$.
\end{itemize}
\end{thm}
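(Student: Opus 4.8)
The plan is as follows.

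\textbf{Setup and reductions.} I would first introduce $V(e):=\sup\{v:(\ell,v)\in\Pi,\ e\subseteq\ell\}$ for an edge $e$ (with $\sup\emptyset=0$). Since $\mathbb{T}$ is a tree there are no shortcuts, so for any $x,y$ one has $T(x,y)=\sum_{e\in[x,y]}1/V(e)$; in particular $\sum_{n\geq1}T(x_{n-1},x_n)=\sum_{n\geq1}1/V(e_n)$ along a ray $(\varnothing=x_0,x_1,\ldots)$, writing $e_n=\{x_{n-1},x_n\}$. The only input from $\mu$ needed is the normalisation $\mu\{\ell\ni x,y\}=2^{-d(x,y)}$, which gives: (i) for a geodesic segment $P$ of length $\ell$, the number of atoms $(\ell',v)\in\Pi$ with $\ell'\supseteq P$ and $v>s$ is Poisson with mean $\tfrac{c}{\beta-1}2^{-\ell}s^{1-\beta}$ (the proportionality constant $c$ being immaterial, as rescaling it amounts to rescaling all speeds, hence $T$, by a constant); (ii) for fixed $s>0$, the atoms of $\Pi$ of speed $\geq s$ project to an automorphism-invariant Poisson process on $\mathbb{L}$ with intensity $\tfrac{c}{\beta-1}s^{1-\beta}\mu$. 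For the second part it is convenient to note, via K\"onig's lemma and the fact that $\{x:T(\varnothing,x)\leq C\}$ is closed under taking ancestors, that the absence of an explosive ray for a given $C$ is equivalent to the finiteness of $\#\{x:T(\varnothing,x)\leq C\}$; hence it suffices to prove $\sum_{N\geq1}2^N\,\mathbb{P}(T(P_N)\leq C)<\infty$ for every $C$, where $P_N$ is a fixed geodesic segment of length $N$.

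\textbf{The subcritical case $\beta<2$.} Here I would construct an explosive ray by a greedy multi-scale road-hopping argument. Fix $w_i=2^i$. Almost surely some vertex lies on a road of speed $\geq1$; let $v_0$ be a closest such vertex to $\varnothing$, fix a speed-$\geq w_0$ road through it, and note $T(\varnothing,v_0)<\infty$ a.s. Inductively, having reached a vertex lying on a road $R$ of speed $\geq w_i$, follow $R$ away from $\varnothing$; at each vertex encountered there is, conditionally on the past, probability $\geq c'\,\tfrac{c}{\beta-1}w_{i+1}^{1-\beta}$ of meeting a road of speed $\geq w_{i+1}$ (using the lines through that vertex along its third edge, which are fresh relative to the exploration), so a.s. after $N_i$ steps with $\mathbb{E}[N_i]\leq c''\,w_{i+1}^{\beta-1}$ one does, and one switches to it. The resulting infinite downward path is a geodesic ray whose $i$-th block consists of $N_i$ edges all of $V$-value $\geq w_i$, so its total passage time is at most $T(\varnothing,v_0)+\sum_i N_i/w_i$; since $\mathbb{E}\big[\sum_i N_i/w_i\big]\leq c''\sum_i w_{i+1}^{\beta-1}/w_i=c''2^{\beta-1}\sum_i 2^{i(\beta-2)}<\infty$ for $\beta<2$, this is finite a.s.

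\textbf{The supercritical case $\beta>2$.} For the other direction I would estimate $\mathbb{P}(T(P_N)\leq C)$ through a dyadic decomposition of the roads by speed. On $\{T(P_N)\leq C\}$, for each level $k$ at most $C2^{k+1}$ edges of $P_N$ have $V<2^{k+1}$, so $P_N$ is covered, apart from a union of few short gaps, by the process $G_k$ of roads of speed $\geq2^k$ (which is sparse for $k$ large); bounding the probability that a collection of roads realises such a covering, each road covering a run of length $\ell$ contributes a factor $\approx2^{-\ell}$ from (i), and these multiply to $\lesssim2^{-N}$ over the whole segment while the remaining speed and combinatorial factors are summable when $\beta>2$. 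The main obstacle is that this must be carried out at all $\Theta(\log N)$ relevant scales simultaneously: a single scale yields only a polynomial-in-$N$ bound on $\mathbb{P}(T(P_N)\leq C)$, hence a strictly weaker threshold. Reaching the sharp value $\beta=2$ requires tracking how the cost accumulates across scales --- exactly the bookkeeping made explicit by the construction above, where the cost incurred near depth $D$ is of order $D^{-1/(\beta-1)}$ per unit depth and is summable over $D$ precisely when $\beta<2$ --- and converting this accumulation into a rigorous upper bound on $\mathbb{P}(T(P_N)\leq C)$ that is summable against $2^N$ for every $\beta>2$ is where the real work lies.
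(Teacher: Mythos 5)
Your subcritical argument is sound in outline and close in spirit to the paper's: the paper runs a greedy process that at each vertex $x$ picks the faster of the two ``cherry'' roads through the children $x1,x2$, observes that the increments $W_n=V_{X_{n-1}1,X_{n-1}2}^{-1}$ are i.i.d.\ (a line through both children of a depth-$n$ vertex stays in its subtree, hence is independent of the exploration up to depth $n$), and concludes by Jeulin's lemma that $\sum_n W_1\wedge\cdots\wedge W_n$ converges exactly when $\beta<2$. Your dyadic road-hopping variant reaches the same threshold, but one detail needs fixing: a road through a vertex $y$ ``along its third edge'' is \emph{not} automatically fresh, since its other ray may climb back through the parent of $y$ into sets of lines you have already conditioned to carry no fast road, and for a Poisson process this conditioning lowers, not raises, the chance of success at $y$. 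You should instead test for fast roads through the two children of $y$ (or through the off-road child together with the on-road child), which avoid every previously examined set of lines; with that modification the geometric domination of $N_i$ and the bound $\sum_i\mathbb{E}[N_i]/w_i\lesssim\sum_i2^{i(\beta-2)}<\infty$ go through.

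The supercritical case is where the genuine gap lies, and you have essentially flagged it yourself. Two problems. First, your reduction demands $\sum_N2^N\,\mathbb{P}(T(P_N)\leq C)<\infty$ for \emph{every} $C$, which is more than needed: by invariance it suffices to kill explosion at one small time scale, since if a ray explodes with total time $C$ then some vertex $x$ far enough along it realises the event $A_x$ that the tail from $x$ costs at most $t$, and $\mathbb{P}(A_x)=\mathbb{P}(A_\varnothing)=0$ once the ball $\{x:T(\varnothing,x)\leq t\}$ is a.s.\ finite for a single small $t$. Second, and more seriously, the covering/union-bound estimate you sketch is precisely the Kahn-style bound the paper proves as Proposition \ref{prop:kahn}: it yields
\[\mathbb{P}(T(\varnothing,1_n)\leq t)\leq2^{-n}\cdot\Bigl(\frac{t}{n}\Bigr)^{\beta-1}\cdot\exp\Bigl[\sum_{k=1}^{n-1}\frac{k+1}{k^{\beta-1}}\cdot t^{\beta-1}\Bigr],\]
whose exponential factor grows like $\exp\bigl[\Theta\bigl(t^{\beta-1}n^{3-\beta}\bigr)\bigr]$ for $2<\beta<3$ and therefore destroys summability against $2^n$; the paper explicitly notes that this route closes the argument only for $\beta\geq3$ and lists the sharpness of the bound for $2<\beta\leq3$ as an open question. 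The paper's actual proof of non-explosion for all $\beta>2$ is different: it bounds $\mathbb{E}[\#\{x\in[\mathbb{T},\varnothing]_n:T(\varnothing,x)\leq t\}]$ directly by a Palm-calculus (Slivnyak--Mecke) induction on the radius $n$, conditioning on the fastest road $(\ell,v)$ through the root, decomposing the ball according to where one leaves $\ell$ or first meets a road faster than $v$, and propagating the inductive bound $\varphi_n(v)\leq v$ for $v\geq1/t$ with $t$ small. You would need either that argument or a genuinely new multi-scale estimate; as written, the key step of the $\beta>2$ direction is missing.
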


\paragraph{Acknowledgements.}
I warmly thank Nicolas Curien and Arvind Singh for their constant support and guidance, and for their valuable comments on earlier versions of this paper.
We thank Itai Benjamini for suggesting to look at the driving distance problem in the $3$-regular tree.
Finally, I am grateful to the PizzaMa team in Orsay for their encouraging feedback.

\tableofcontents

\section{The invariant measure on the space of lines}\label{sec:mu}

In this section, we give a description of the invariant measure $\mu$ on the space of lines $\mathbb{L}$ in terms of the uniform measure on the boundary of $\mathbb{T}$ (see Proposition \ref{prop:mu} below).
This is analogous to better-known case of the hyperbolic plane (see, e.g, \cite[Section 6]{visibility}), and certainly not new, but we were unable to find it in the literature.
Let us first recall some basic definitions and facts.
We rely on the authoritative reference \cite{lyonsperes}.

\begin{itemize}
\item A \emph{line} in $\mathbb{T}$ is the trace $\{x_n,\,n\in\mathbb{Z}\}$ of a bi-infinite geodesic path $(x_n)_{n\in\mathbb{Z}}$, i.e, such that $d(x_m,x_n)=|m-n|$ for all $m,n\in\mathbb{Z}$, where $d(\cdot,\cdot)$ denotes the graph distance on $\mathbb{T}$.
We denote by $\mathbb{L}$ the set of lines.

\item A \emph{ray} in $\mathbb{T}$ is an infinite non-backtracking path $(\varnothing=x_0,x_1,\ldots)$ starting at the root. 
We denote by $\partial\mathbb{T}$ the set of rays, also known as the boundary of $\mathbb{T}$.
Given two distinct rays $\xi=(x_n)_{n\in\mathbb{N}}$ and $\eta=(y_n)_{n\in\mathbb{N}}$, we denote by $\xi\wedge\eta$ the farthest node from the root that is common to both paths $\xi$ and $\eta$.
We equip $\partial\mathbb{T}$ with the metric $d:\partial\mathbb{T}\times\partial\mathbb{T}\rightarrow\mathbb{R}_+$ defined by 
\[d(\xi,\eta)=\begin{cases}
0&\text{if $\xi=\eta$}\\
2^{-|\xi\wedge\eta|}&\text{otherwise}
\end{cases}\quad\text{for all $\xi,\eta\in\partial\mathbb{T}$}.\]
There is a natural Borel probability measure on $\partial\mathbb{T}$; namely, the law of a non-backtracking random walk $(X_n)_{n\in\mathbb{N}}$ starting at the root.
With this measure as mass distribution, it is not difficult to check that $(\partial\mathbb{T},d)$ has Hausdorff dimension $1$.
\item Given two distinct rays ${\xi=(x_n)_{n\in\mathbb{N}}}$ and ${\eta=(y_n)_{n\in\mathbb{N}}}$, we denote by $\Lambda(\xi,\eta)$ the line with endpoints $\xi$ and $\eta$; namely,
\[\Lambda(\xi,\eta)=\{\ldots,x_{n+1},\xi\wedge\eta,y_{n+1},\ldots\},\]
where $n=|\xi\wedge\eta|$.
Denoting by ${\Delta=\{(\xi,\xi)\,;\,\xi\in\partial\mathbb{T}\}}$ the diagonal in $\partial\mathbb{T}\times\partial\mathbb{T}$, consider the surjective (two-to-one) mapping
\[\begin{matrix}
\Lambda:&(\partial\mathbb{T}\times\partial\mathbb{T})\setminus\Delta&\longrightarrow&\mathbb{L}\\
&(\xi,\eta)&\longmapsto&\Lambda(\xi,\eta).
\end{matrix}\]
We endow $\mathbb{L}$ with the finest topology that makes $\Lambda$ into a continuous map (final topology), and with the corresponding Borel $\sigma$-algebra.
\item Every graph automorphism ${\phi:\mathbb{T}\rightarrow\mathbb{T}}$ naturally extends to a continuous map from $\mathbb{L}$ to $\mathbb{L}$, mapping the line ${\{x_n,\,n\in\mathbb{Z}\}}$ onto $\{\phi(x_n),\,n\in\mathbb{Z}\}$.
We say that a Borel measure $\mu$ on $\mathbb{L}$ is \emph{invariant} if for every graph automorphism $\phi:\mathbb{T}\rightarrow\mathbb{T}$, the pushforward $\phi_*\mu$ of $\mu$ by $\phi$ agrees with $\mu$.
\item For a subset $S\subset\mathbb{T}$, we denote by $\langle S\rangle=\{\ell\in\mathbb{L}:\ell\cap S\neq\emptyset\}$ the set of lines that hit $S$.
For $x\in\mathbb{T}$, we write $\langle x\rangle=\langle\{x\}\rangle$ for the set of lines that pass through $x$; and for $x\neq y\in\mathbb{T}$, we write $\langle x,y\rangle=\langle x\rangle\cap\langle y\rangle$ for the set of lines that pass through both points $x$ and $y$.
Note that $\langle x,y\rangle$ is not the same as $\langle\{x,y\}\rangle$.

\begin{claim}\label{claimpisystem}
The collection $\mathcal{C}=\{\langle x,y\rangle\,;\;x\neq y\in\mathbb{T}\}\cup\{\emptyset\}$ forms a $\pi$-system that generates the Borel $\sigma$-algebra on $\mathbb{L}$.
\end{claim}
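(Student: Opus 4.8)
The plan is to establish the two assertions of the claim separately. Throughout I will use that a line $\ell$ is \emph{convex} in $\mathbb{T}$: if $a,b\in\ell$ then the geodesic segment $[a,b]$ is contained in $\ell$, because geodesics in a tree are unique; in particular $\langle x,y\rangle=\{\ell\in\mathbb{L}:[x,y]\subseteq\ell\}$ for all $x\neq y$. For the $\pi$-system property, given $x\neq y$ and $x'\neq y'$, convexity yields
\[\langle x,y\rangle\cap\langle x',y'\rangle=\{\ell\in\mathbb{L}:H\subseteq\ell\},\]
where $H$ is the convex hull of $\{x,y,x',y'\}$, i.e.\ the smallest subtree of $\mathbb{T}$ containing these four points. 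Since $\ell$ is a bi-infinite path, it can contain $H$ only if $H$ is itself a path; moreover the leaves of $H$ necessarily belong to $\{x,y,x',y'\}$ (a leaf not in the set would be interior to some geodesic between two of the points). Hence if $H$ is a path its two endpoints $a\neq b$ lie in $\{x,y,x',y'\}$ and the intersection equals $\langle a,b\rangle\in\mathcal{C}$, while if $H$ has a branching vertex no line contains it and the intersection is $\emptyset\in\mathcal{C}$. Together with $\emptyset\cap A=\emptyset$, this shows $\mathcal{C}$ is a $\pi$-system.

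For the inclusion $\sigma(\mathcal{C})\subseteq\mathcal{B}(\mathbb{L})$, the key input is an elementary description of $\Lambda^{-1}(\langle z\rangle)$ for a vertex $z$. Writing $\xi=(x_0,x_1,\dots)$, $\eta=(y_0,y_1,\dots)$ and $n=|\xi\wedge\eta|$, one has $\Lambda(\xi,\eta)=\{x_k:k\geq n\}\cup\{y_k:k\geq n\}$, so that $z\in\Lambda(\xi,\eta)$ if and only if ($z$ lies on $\xi$ or on $\eta$) and $|\xi\wedge\eta|\leq|z|$. Both conditions depend only on the prefixes of $\xi$ and $\eta$ down to depth $|z|+1$, so $\Lambda^{-1}(\langle z\rangle)$ is clopen in $(\partial\mathbb{T}\times\partial\mathbb{T})\setminus\Delta$. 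Since $\mathbb{L}$ carries the final topology induced by $\Lambda$, it follows that $\langle z\rangle$, and hence every $\langle x,y\rangle=\langle x\rangle\cap\langle y\rangle$, is clopen in $\mathbb{L}$; in particular $\mathcal{C}$ consists of Borel sets.

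For the reverse inclusion I will exhibit a countable basis of $\mathbb{L}$ consisting of elements of $\mathcal{C}$. For $u\in\mathbb{T}$ set $C_u=\{\xi\in\partial\mathbb{T}:u\in\xi\}$; these cylinders are clopen, and for fixed $\xi$ the sets $C_{\xi_k}$ (with $\xi_k$ the depth-$k$ vertex of $\xi$) form a neighbourhood basis of $\xi$ in $\partial\mathbb{T}$, so the products $C_{\xi_k}\times C_{\eta_k}$ form a neighbourhood basis of $(\xi,\eta)$. The computation above shows that for incomparable vertices $u,v$ one has $\Lambda(C_u\times C_v)=\langle u,v\rangle$: a line through $u$ and $v$ must pass through the deepest common ancestor of $u$ and $v$, so its two ends are separated from the root by $u$ and by $v$ respectively, and the converse is immediate. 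Now let $U\subseteq\mathbb{L}$ be open and $\ell\in U$; pick a preimage $\ell=\Lambda(\xi,\eta)$. Then $\Lambda^{-1}(U)$ is open and contains $(\xi,\eta)$, so there is $k>|\xi\wedge\eta|$ with $C_{\xi_k}\times C_{\eta_k}\subseteq\Lambda^{-1}(U)$; writing $u=\xi_k$ and $v=\eta_k$ (distinct vertices at equal depth, hence incomparable) we get $\ell\in\langle u,v\rangle=\Lambda(C_{\xi_k}\times C_{\eta_k})\subseteq\Lambda(\Lambda^{-1}(U))=U$. Thus every open subset of $\mathbb{L}$ is a countable union of members of $\mathcal{C}$, so $\mathcal{B}(\mathbb{L})\subseteq\sigma(\mathcal{C})$, completing the proof.

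I expect the only genuine difficulty to be the bookkeeping in the last step: correctly identifying $\Lambda(C_u\times C_v)$ with $\langle u,v\rangle$ for incomparable $u,v$, and checking that the resulting family is a basis — not merely a cover — of the final topology on $\mathbb{L}$. The $\pi$-system part is routine tree combinatorics, and the clopenness of $\langle x,y\rangle$ follows at once from the finite-coordinate description of $\Lambda^{-1}(\langle z\rangle)$.
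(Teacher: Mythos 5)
Your proof is correct and follows the same three-step outline as the paper's (which is only a sketch): the $\pi$-system property via convexity of lines, membership in the Borel $\sigma$-algebra via topological regularity of $\langle x,y\rangle$ (you even get clopen rather than merely closed), and generation by writing every open set as a countable union of sets $\langle u,v\rangle$. The details you supply — the convex-hull case analysis and the identity $\Lambda(C_u\times C_v)=\langle u,v\rangle$ for incomparable $u,v$ — are exactly the ones the paper leaves implicit, and they check out.
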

\begin{proof}[Sketch of proof]
The fact that $\mathcal{C}$ is a $\pi$-system is clear.
That $\mathcal{C}$ is included in the Borel $\sigma$-algebra follows from the fact that the ${(\langle x,y\rangle\,;\;x\neq y\in\mathbb{T})}$ are closed subsets of $\mathbb{L}$.
Finally, the collection $\mathcal{C}$ generates the Borel $\sigma$-algebra on $\mathbb{L}$, as every open subset of $\mathbb{L}$ can be written as a countable union of elements of $\mathcal{C}$.
\end{proof}

\item Finally, we say that a Borel measure $\mu$ on $\mathbb{L}$ is \emph{locally finite} if $\mu\langle x,y\rangle<\infty$ for all $x\neq y\in\mathbb{T}$.
\end{itemize}

We are now ready to state the main result of this section.

\begin{prop}\label{prop:mu}
The Borel measure $\mu$ defined by
\[\int_\mathbb{L}\varphi(\ell)\mathrm{d}\mu(\ell)=\mathbb{E}\left[\frac{\varphi(\Lambda(X,Y))}{d(X,Y)^2}\right]\quad\text{for all Borel functions $\varphi:\mathbb{L}\rightarrow[0,\infty]$,}\]
where $X=(X_n)_{n\in\mathbb{N}}$ and $Y=(Y_n)_{n\in\mathbb{N}}$ are two independent non-backtracking random walks starting at the root in $\mathbb{T}$, is locally finite and invariant.
Moreover, for any locally finite invariant Borel measure $\mu$ on $\mathbb{L}$, there exists a constant $c>0$ such that
\[\mu\langle x,y\rangle=c\cdot2^{-d(x,y)}\quad\text{for all $x\neq y\in\mathbb{T}$.}\]
\end{prop}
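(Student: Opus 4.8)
The plan is to treat the two assertions separately; the uniqueness statement is short, so I would prove it first. Let $\mu$ be any locally finite invariant measure. Since $\mathrm{Aut}(\mathbb{T})$ acts transitively on geodesic segments of each fixed length, $\mu\langle x,y\rangle$ depends only on $d(x,y)$; write $f(m)$ for its value when $d(x,y)=m\geq1$. The key point is a recursion: if $d(x,y)=m$ and $z_1,z_2$ denote the two neighbours of $y$ with $d(x,z_i)=m+1$, then a line through $x$ and $y$ must leave $y$ through exactly one of $z_1,z_2$; conversely a line through $x$ and $z_i$ automatically passes through $y$ (as $y\in[x,z_i]$), and no line can pass through $x$, $z_1$ and $z_2$ simultaneously (that would be a tripod). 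Hence $\langle x,y\rangle=\langle x,z_1\rangle\sqcup\langle x,z_2\rangle$, so $f(m)=2f(m+1)$, i.e.\ $f(m)=c\cdot2^{-m}$ with $c:=2f(1)$. Local finiteness forces $c<\infty$, and $c>0$ unless $\mu\equiv0$ (tacitly excluded). This settles the second assertion.

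For the first assertion, write $o=\varnothing$. \emph{Well-definedness:} since $X,Y$ are independent, the chance their first $n$ steps agree is $\tfrac13 2^{1-n}\to0$, so a.s.\ $X\neq Y$ and $\Lambda(X,Y)$ is defined; $\Lambda$ being continuous, the integrand is Borel and the formula is meaningful. \emph{Local finiteness:} one has $|X\wedge Y|=d(o,\Lambda(X,Y))$, and on $\{\Lambda(X,Y)\in\langle x,y\rangle\}$ the line contains $x$ and $y$, so $|X\wedge Y|\leq\min(d(o,x),d(o,y))$; since $d(X,Y)^{-2}=4^{|X\wedge Y|}$ this gives $\mu\langle x,y\rangle\leq4^{\min(d(o,x),d(o,y))}<\infty$.

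\emph{Invariance:} for a vertex $v$ let $\mu_v$ be the measure given by the displayed formula with $o$ replaced by $v$ (so $\mu=\mu_\varnothing$), and let $\rho_v$ be the law on $\partial\mathbb{T}$ of a non-backtracking walk from $v$. A direct check gives $\phi_*\mu_\varnothing=\mu_{\phi(\varnothing)}$ for every automorphism $\phi$ (the weight $d(X,Y)^{-2}=2^{2d(\varnothing,\Lambda(X,Y))}$ is carried to $2^{2d(\phi(\varnothing),\Lambda(\phi X,\phi Y))}$, and $\phi X,\phi Y$ are independent non-backtracking walks from $\phi(\varnothing)$). As $\mathrm{Aut}(\mathbb{T})$ is vertex-transitive, invariance is therefore equivalent to $\mu_v=\mu_w$ for all $v,w$, hence by connectedness to $\mu_v=\mu_w$ for adjacent $v,w$. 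For this I would use the identity $\mu_v=\Lambda_*\big(2^{2d(v,\Lambda(\xi,\eta))}\,\rho_v(\mathrm{d}\xi)\,\rho_v(\mathrm{d}\eta)\big)$ and show the measure inside the pushforward is unchanged when $v$ is replaced by an adjacent $w$. This comes down to two elementary facts. First, $\tfrac{\mathrm{d}\rho_w}{\mathrm{d}\rho_v}(\xi)=2^{\beta_\xi(v,w)}$, where $\beta_\xi(v,w)=+1$ if the ray from $v$ to $\xi$ uses the edge $\{v,w\}$ and $-1$ otherwise: indeed, conditionally on which side of $\{v,w\}$ the end $\xi$ lies, $\rho_v$ and $\rho_w$ induce the same measure, while the masses of the two sides are $\tfrac13$ and $\tfrac23$. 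Second, $d(v,\Lambda(\xi,\eta))-d(w,\Lambda(\xi,\eta))=\tfrac12\big(\beta_\xi(v,w)+\beta_\eta(v,w)\big)$, checked by inspecting whether both, one, or neither of $\xi,\eta$ lie on the $w$-side of $\{v,w\}$. Multiplying, $2^{2d(w,\Lambda)}\tfrac{\mathrm{d}\rho_w}{\mathrm{d}\rho_v}(\xi)\tfrac{\mathrm{d}\rho_w}{\mathrm{d}\rho_v}(\eta)=2^{2d(v,\Lambda)}$, so the two measures coincide and $\mu_w=\mu_v$. (Alternatively one can avoid boundary measures entirely: a somewhat tedious case analysis on $|X\wedge Y|$ and on the position of $o$ relative to $[x,y]$ yields the exact value $\mu\langle x,y\rangle=\tfrac89\,2^{-d(x,y)}$, which already gives local finiteness and, depending only on $d(x,y)$, yields invariance via the $\pi$-system $\mathcal C$ of Claim \ref{claimpisystem}, using that $\mathbb{L}$ is exhausted by the finite-measure sets $\langle B(\varnothing,n)\rangle\in\sigma(\mathcal C)$ of lines meeting the ball of radius $n$ about $\varnothing$.)

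The one genuinely technical point is the invariance computation: pinning down the Radon--Nikodym cocycle of $\rho_v$ under automorphisms and checking that it cancels exactly the change in $2^{-|\xi\wedge\eta|}=2^{-d(\cdot,\Lambda(\xi,\eta))}$. This is the discrete counterpart of the Möbius-invariance of the Liouville measure $\tfrac{\mathrm{d}\theta_1\,\mathrm{d}\theta_2}{|e^{i\theta_1}-e^{i\theta_2}|^2}$ on the boundary circle of the hyperbolic plane; everything else in the proof is soft.
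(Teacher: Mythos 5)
Your proof is correct. The uniqueness half is the same argument as the paper's: decompose $\langle x,y\rangle$ as the disjoint union over the two ways a line can exit $y$ away from $x$, deduce $f(m)=2f(m+1)$, and conclude (modulo the degenerate case $\mu\equiv0$, which the paper's own proof also leaves tacit). For existence and invariance you take a genuinely different route. The paper computes the exact value $\mu\langle x,y\rangle=\tfrac{8}{9}\cdot2^{-d(x,y)}$ by a two-case analysis according to whether $x$ and $y$ are descendants of one another, and then deduces $\phi_*\mu=\mu$ from the fact that this value depends only on $d(x,y)$, via the $\pi$-system $\mathcal{C}$ of Claim~\ref{claimpisystem} and Dynkin's theorem. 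You instead show that the measure $2^{2d(v,\Lambda(\xi,\eta))}\rho_v(\mathrm{d}\xi)\rho_v(\mathrm{d}\eta)$ on $(\partial\mathbb{T}\times\partial\mathbb{T})\setminus\Delta$ is independent of the basepoint $v$, by computing the Radon--Nikodym cocycle $\mathrm{d}\rho_w/\mathrm{d}\rho_v=2^{\pm1}$ across an edge and checking that it exactly cancels the change in $2^{2d(\cdot,\Lambda)}$; your three-case verification of $d(v,\Lambda)-d(w,\Lambda)=\tfrac12(\beta_\xi+\beta_\eta)$ is right, and local finiteness follows from your cruder bound $\mu\langle x,y\rangle\leq4^{\min(|x|,|y|)}$. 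Your approach yields invariance as an identity of measures on the boundary square, with no appeal to a generating $\pi$-system, and makes explicit the analogy with the M\"obius-invariance of the Liouville measure on the boundary of $\mathbb{H}$ that the paper only alludes to; what the paper's computation buys in exchange is the explicit constant $8/9$ (hence the normalisation \eqref{eq:munorm}), which your main argument does not produce but which you correctly flag as recoverable by the case analysis you sketch in the parenthetical.
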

\begin{proof}
We start with the second assertion.
Let $\mu$ be a locally finite invariant Borel measure on $\mathbb{L}$, and let $x\neq y\in\mathbb{T}$.
Set ${n=d(x,y)}$, and let $1_n$ be the vertex $1\ldots1\in\mathbb{T}$ whose distance to the root is $n$.
We have $\mu\langle x,y\rangle=\mu\langle\varnothing,1_n\rangle$.
Now, we claim that ${\mu\langle\varnothing,1_n\rangle=\mu\langle\varnothing,1\rangle\cdot2^{-(n-1)}}$ for all $n\in\mathbb{N}^*$.
This is easily checked by induction: for each $n\in\mathbb{N}^*$, the set $\langle\varnothing,1_n\rangle$ can be written as the disjoint union $\langle\varnothing,1_n1\rangle\sqcup\langle\varnothing,1_n2\rangle$; thus, by invariance,
\[\mu\langle\varnothing,1_n\rangle=\mu\langle\varnothing,1_n1\rangle+\mu\langle\varnothing,1_n2\rangle=2\cdot\mu\langle\varnothing,1_{n+1}\rangle.\]
For the first assertion, let $\mu$ be the Borel measure defined by
\[\int_\mathbb{L}\varphi(\ell)\mathrm{d}\mu(\ell)=\mathbb{E}\left[\frac{\varphi(\Lambda(X,Y))}{d(X,Y)^2}\right]\quad\text{for all Borel functions $\varphi:\mathbb{L}\rightarrow[0,\infty]$,}\]
where $X=(X_n)_{n\in\mathbb{N}}$ and $Y=(Y_n)_{n\in\mathbb{N}}$ are two independent non-backtracking random walks starting at the root.
We claim that
\begin{equation}\label{eq:constructmu}
\mu\langle x,y\rangle=\frac{8}{9}\cdot2^{-d(x,y)}\quad\text{for all $x\neq y\in\mathbb{T}$.}
\end{equation}
Indeed, let $x\neq y\in\mathbb{T}$.
We have
\[\mu\langle x,y\rangle=\mathbb{E}\left[d(X,Y)^{-2}\,;\,\text{$\Lambda(X,Y)$ passes through $x$ and $y$}\right].\]
Now, we distinguish two cases.
\begin{itemize} 
\item First, suppose that $x$ and $y$ are not descendants of one another.
Then $\Lambda(X,Y)$ passes through $x$ and $y$ if and only if $X$ passes through $x$ and $Y$ through $y$, or (exclusive) $X$ passes through $y$ and $Y$ through $x$.
Moreover, on that event, we have ${d(X,Y)=2^{-|x\wedge y|}}$.
It follows that
\[\begin{split}
\mu\langle x,y\rangle&=2^{2|x\wedge y|}\cdot\left(3\cdot2^{|x|-1}\right)^{-1}\cdot\left(3\cdot2^{|y|-1}\right)^{-1}+2^{2|x\wedge y|}\cdot\left(3\cdot2^{|y|-1}\right)^{-1}\cdot\left(3\cdot2^{|x|-1}\right)^{-1}\\
&=2\cdot\frac{4}{9}\cdot2^{2|x\wedge y|-|x|-|y|}=\frac{8}{9}\cdot2^{-d(x,y)}.
\end{split}\]
\item Next, to treat the case where $x$ and $y$ are descendants of one another, we may assume without loss of generality that $x\prec y$.
Let us denote by $\varnothing=z_0,\ldots,z_n=y$ the geodesic path from the root to $y$.
By assumption, we have $z_m=x$ for some $m\in\llbracket0,n\llbracket$.
Now, we define $x_{-1}$ and $x_0$ as the two neighbours of $\varnothing$ that are not $z_1$; and for each $k\in\llbracket1,m\rrbracket$, we define $x_k$ as the neighbour of $z_k$ that is neither $z_{k-1}$ nor $z_{k+1}$.
See Figure \ref{fig:mu} for an illustration.

\begin{figure}[ht]
\centering
\includegraphics[width=0.5\linewidth]{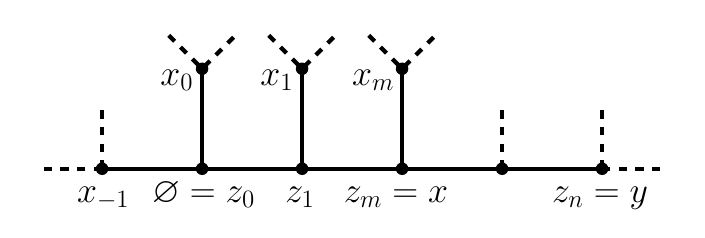}
\caption{Illustration of the definition of $x_{-1},x_0,\ldots,x_m$.}\label{fig:mu}
\end{figure}

The set $\langle x,y\rangle$ can be written as the disjoint union $\bigsqcup_{k=-1}^m\langle x_k,y\rangle$, where for each $k\in\llbracket-1,m\rrbracket$, the vertices $x_k$ and $y$ are not descendants of one another.
By the previous case, it follows that
\[\begin{split}
\mu\langle x,y\rangle&=\sum_{k=-1}^m\frac{8}{9}\cdot2^{-d(x_k,y)}\\
&=\frac{8}{9}\cdot\left(2^{-(n+1)}+\sum_{k=0}^m2^{-(n-k+1)}\right)\\
&=\frac{8}{9}\cdot\left(2^{-(n+1)}+2^{-(n+1)}\cdot\left(2^{m+1}-1\right)\right)\\
&=\frac{8}{9}\cdot2^{-(n-m)}=\frac{8}{9}\cdot2^{-d(x,y)}.
\end{split}\]
\end{itemize}
This completes the proof of \eqref{eq:constructmu}.
Now, let $\phi:\mathbb{T}\rightarrow\mathbb{T}$ be a graph automorphism.
By \eqref{eq:constructmu}, we have
\[\mu\langle\phi(x),\phi(y)\rangle=\frac{8}{9}\cdot2^{-d(\phi(x),\phi(y))}=\frac{8}{9}\cdot2^{-d(x,y)}=\mu\langle x,y\rangle\quad\text{for all $x\neq y\in\mathbb{T}$.}\]
In particular, the Borel measures $\phi_*\mu$ and $\mu$ agree on the $\pi$-system $\mathcal{C}$.
By Dynkin's $\pi$--$\lambda$ theorem, we deduce that $\mu$ is invariant.
\end{proof}

In the rest of the paper, we denote by $\mu$ the unique locally finite invariant Borel measure on $\mathbb{L}$ such that
\begin{equation}\label{eq:munorm}
\mu\langle x,y\rangle=2^{-d(x,y)}\quad\text{for all $x\neq y\in\mathbb{T}$.}
\end{equation}

\section{Visibility to infinity, despite obstacles}\label{sec:lines}

In this section, we prove Theorem \ref{thm:visibility}.
Following Benjamini, Jonasson, Schramm and Tykesson \cite{visibility}, we let $\Pi$ be a Poisson process with intensity $\alpha\cdot\mu$ on $\mathbb{L}$, where $\alpha>0$ is a parameter, and $\mu$ is the invariant measure on $\mathbb{L}$ normalised by \eqref{eq:munorm}, and we consider the percolative properties of the vacant set ${\mathcal{V}=\left.\mathbb{T}\middle\backslash\bigcup_{\ell\in\Pi}\ell\right.}$.
More precisley, let us recall the statement of Theorem \ref{thm:visibility}.
\begin{itemize}
\item For $\alpha<\alpha_0$, almost surely, the vacant set $\mathcal{V}$ contains a line.
\item For $\alpha\geq\alpha_0$, almost surely, the vacant set $\mathcal{V}$ does not contain any half-line.
\end{itemize}

Before giving the proof of Theorem \ref{thm:visibility}, let us recall some basic properties of the Poisson process of lines $\Pi$.
As usual with well-behaved Poisson processes, we view $\Pi$ sometimes as a random subset of $\mathbb{L}$, and sometimes as a random atomic measure on $\mathbb{L}$, without making the distinction.
An atomic measure on $\mathbb{L}$ is a measure of the form $\pi=\sum_{k=1}^n\delta_{\ell_k}$, with $n\in\llbracket0,\infty\rrbracket$, and $\ell_k\in\mathbb{L}$ for every $k$.
We denote by $\mathbb{M}$ the space of atomic measures on $\mathbb{L}$, equipped with the $\sigma$-algebra generated by the maps
\[\begin{matrix}
\mathbb{M}&\longrightarrow&\llbracket0,\infty\rrbracket\\
\pi&\longmapsto&\pi(B),
\end{matrix}\]
for $B$ Borel subset of $\mathbb{L}$.
By construction, the Poisson process $\Pi$ has the following invariance property: for every graph automorphism $\phi:\mathbb{T}\rightarrow\mathbb{T}$, we have $\phi_*\Pi\overset{\text{\tiny law}}{=}\Pi$, where $\phi_*\Pi$ denotes the pushforward of $\Pi$ by $\phi$.
Moreover, the Poisson process $\Pi$ is mixing: if $(\psi_n)_{n\in\mathbb{N}}$ is a sequence of graph automorphisms of $\mathbb{T}$ such that ${d(\varnothing,\psi_n(\varnothing))\rightarrow\infty}$ as $n\to\infty$, then for every bounded measurable functions $f$ and $g$ from $\mathbb{M}$ to $\mathbb{R}$, we have
\[\mathbb{E}[f(\Pi)\cdot g(\psi_n^*\Pi)]\longrightarrow\mathbb{E}[f(\Pi)]\cdot\mathbb{E}[g(\Pi)]\quad\text{as $n\to\infty$,}\]
where $\psi_n^*\Pi$ denotes the pushforward of $\Pi$ by $\psi_n$.
In particular, any invariant event has probability $0$ or $1$.
Now, we come to the proof of Theorem \ref{thm:visibility}.

\begin{proof}[Proof of Theorem \ref{thm:visibility}]
First, let us set some notation.
For every $x\neq y\in\mathbb{T}$, we denote by $\llbracket x,y\rrbracket$ the geodesic path between $x$ and $y$ in $\mathbb{T}$, and we let ${(x\leftrightarrow y)=(\Pi\langle\llbracket x,y\rrbracket\rangle=0)}$ be the event ``the vacant set $\mathcal{V}$ contains $\llbracket x,y\rrbracket$''.
For every $n\in\mathbb{N}^*$, we denote by $[\mathbb{T},\varnothing]_n=\{x\in\mathbb{T}:d(\varnothing,x)\leq n\}$ the set of vertices within graph distance $n$ from the root, and we let $\partial[\mathbb{T},\varnothing]_n=\{x\in\mathbb{T}:d(\varnothing,x)=n\}$.
Now, for every $n\in\mathbb{N}^*$, let $\mathcal{Z}_n=\{x\in\partial[\mathbb{T},\varnothing]_n:\varnothing\leftrightarrow x\}$, and let $Z_n=\#\mathcal{Z}_n$.
We claim that $(Z_n)_{n\in\mathbb{N}^*}$ is a branching process.
Indeed, for each $n\in\mathbb{N}^*$, let $\mathcal{F}_n$ be the $\sigma$-algebra generated by the restriction of $\Pi$ to the set of lines that hit $[\mathbb{T},\varnothing]_n$, and consider the identity
\[Z_{n+1}=\sum_{x\in\partial[\mathbb{T},\varnothing]_n}\mathbf{1}(\varnothing\leftrightarrow x)\cdot\sum_\text{$y$ child of $x$}\mathbf{1}(\Pi\langle y1,y2\rangle=0).\]
On the one hand, the events $((\varnothing\leftrightarrow x)\,;\,x\in[\mathbb{T},\varnothing]_n)$ are $\mathcal{F}_n$-measurable; on the other hand, the events $((\Pi\langle y1,y2\rangle)\,;\,y\in\partial[\mathbb{T},\varnothing]_{n+1})$ are independent, and independent of $\mathcal{F}_n$.
This shows that conditionally on $\mathcal{F}_n$, the random variable $Z_{n+1}$ is distributed as a sum of $Z_n$ independent binomial random variables with $2$ trials and success probability $\mathbb{P}\left(\Pi\langle y1,y2\rangle=0\right)=e^{-\alpha\cdot\mu\langle y1,y2\rangle}=e^{-\alpha/4}$.
Now, let us complete the proof of the proposition.
\begin{itemize}
\item For $\alpha\geq4\ln2$, we have $\mathbb{E}\left[\mathrm{Binomial}\left(2,e^{-\alpha/4}\right)\right]\leq1$.
By standard branching processes results, we get that almost surely, we have $Z_n=0$ for all sufficiently large $n$, which readily implies that almost surely, the vacant set $\mathcal{V}$ does not contain any ray.
By invariance, we deduce that for every $x\in\mathbb{T}$, the event ``the vacant set $\mathcal{V}$ contains an infinite geodesic path $(x=x_0,x_1,\ldots)$'' has probability $0$, and it follows that almost surely, the vacant set $\mathcal{V}$ does not contain any half-line.

\item For $\alpha<4\ln2$, we have $\mathbb{E}\left[\mathrm{Binomial}\left(2,e^{-\alpha/4}\right)\right]>1$.
Moreover, note that ${\mathbb{P}(Z_1>0)>0}$. 
By standard branching processes results, we get that with positive probability, we have $Z_n>0$ for all $n\in\mathbb{N}^*$.
Thus, by K\H{o}nig's lemma (see, e.g, \cite[Exercise 1.1]{lyonsperes}), with positive probability, say probability $\delta>0$, the vacant set $\mathcal{V}$ contains a ray.
It follows that there exists $i\in\{1,2,3\}$ such that the event $A_i$: ``the vacant set $\mathcal{V}$ contains a ray that passes through the vertex $i$'' has probability at least $\delta/3$; but note that by invariance, the probability of $A_i$ does not depend on $i$.
Therefore, by the Harris--FKG inequality for Poisson processes (see, e.g, \cite[Theorem 20.4]{lastpenrose}), since $A_1$ and $A_2$ are both decreasing events (adding lines to $\Pi$ inhibits them), we get
\[\mathbb{P}(A_1\cap A_2)\geq\mathbb{P}(A_1)\cdot\mathbb{P}(A_2)\geq\left(\frac{\delta}{3}\right)^2>0.\]
Since on the event $A_1\cap A_2$, the vacant set $\mathcal{V}$ contains a line that passes through the root, we deduce that the percolation event $A$: ``the vacant set $\mathcal{V}$ contains a line'' has positive probability.
Finally, observe that $A$ is invariant.
Since $\Pi$ is mixing, we must have $\mathbb{P}(A)=1$.
\end{itemize}
\end{proof}

\section{Driving to infinity with a Poisson process of roads}\label{sec:roads}

In this section, we prove Theorem \ref{thm:explosion}.
Following Aldous \cite{aldous} and Kendall \cite{kendall}, we let $\Pi$ be a Poisson process with intensity measure $\nu$ proportional to $\mu\otimes v^{-\beta}\mathrm{d}v$ on $\mathbb{L}\times\mathbb{R}_+^*$, where $\beta>1$ is a parameter, and $\mu$ is the invariant measure on $\mathbb{L}$ normalised by \eqref{eq:munorm}.
Viewing each atom $(\ell,v)$ of $\Pi$ as a road in $\mathbb{T}$, with $v$ the speed limit on the line $\ell$, we consider the random metric $T:\mathbb{T}\times\mathbb{T}\rightarrow\mathbb{R}_+$ induced by the driving distance with respect to the road network generated by $\Pi$.
Unlike in the Euclidean case \cite{kendall,kahn,moa}, there is no issue in defining this driving distance metric for all values of $\beta>1$ (see just below), and we consider its ``explosive'' properties.
More precisely, let us recall the statement of Theorem \ref{thm:explosion}.
\begin{itemize}
\item For $\beta<2$, we have $T(\varnothing,\partial\mathbb{T})<\infty$ almost surely; i.e, almost surely, there exists a ray $(\varnothing=x_0,x_1,\ldots)$ such that $\sum_{n\geq1}T(x_{n-1},x_n)<\infty$.
\item For $\beta>2$, we have $T(\varnothing,\partial\mathbb{T})=\infty$ almost surely.
\end{itemize}

Before proving Theorem \ref{thm:explosion}, we present the model in more detail.
Then, we consider the phase $\beta<2$ in Subsection \ref{subsec:explosion} (see Proposition \ref{prop:greedy}), and the phase $\beta>2$ in Subsection \ref{subsec:nonexplosive} (see Proposition \ref{prop:noexplosion}).

\paragraph{The Poisson process of roads $\Pi$.}

We let $\Pi$ be a Poisson process with intensity measure ${\nu=(\beta-1)\cdot\mu\otimes v^{-\beta}\mathrm{d}v}$ on $\mathbb{L}\times\mathbb{R}_+^*$, where $\beta>1$ is a parameter.
The normalising constant $(\beta-1)$ is here for convenience, so that
\[\nu(\langle x,y\rangle\times[v,\infty[)=2^{-d(x,y)}\cdot v_0^{-(\beta-1)}\quad\text{for all $x\neq y\in\mathbb{T}$ and $v_0\in\mathbb{R}_+^*$.}\]
In fact, this multiplicative constant does not affect the result of Theorem \ref{thm:explosion}, as multiplying $\nu$ by a constant factor does not change the probability of the explosion event $(T(\varnothing,\partial\mathbb{T})<\infty)$.
Indeed, if $\Pi_\alpha$ is a Poisson process with intensity $\alpha\cdot\nu$ on $\mathbb{L}\times\mathbb{R}_+^*$, where $\alpha>0$, then a change of variables shows that 
\[\Pi_\alpha\overset{\text{\tiny law}}{=}\left\{\left(\ell,\alpha^{1/(\beta-1)}\cdot v\right);\;(\ell,v)\in\Pi\right\}.\]
It follows that the metric $T_\alpha$ induced by $\Pi_\alpha$ has the same distribution as $\alpha^{-1/(\beta-1)}\cdot T$.
In particular, explosion occurs for $T_\alpha$ with the same probability as for $T  $.
As usual with well-behaved Poisson processes, we view $\Pi$ sometimes as a random subset of $\mathbb{L}\times\mathbb{R}_+^*$, and sometimes as a random atomic measure on $\mathbb{L}\times\mathbb{R}_+^*$, without making the distinction.
Note that there is no multiplicity ambiguity here, since almost surely, we have $\Pi(\mathbb{L}\times\{v\})\leq1$ for all $v\in\mathbb{R}_+^*$.
We recall that an atomic measure on $\mathbb{L}\times\mathbb{R}_+^*$ is a measure of the form $\pi=\sum_{k=1}^n\delta_{(\ell_k,v_k)}$, with $n\in\llbracket0,\infty\rrbracket$ and $(\ell_k,v_k)\in\mathbb{L}\times\mathbb{R}_+^*$ for every $k$.
We denote by $\mathbb{M}$ the space of atomic measures on $\mathbb{L}\times\mathbb{R}_+^*$, equipped with the $\sigma$-algebra generated by the maps
\[\begin{matrix}
\mathbb{M}&\longrightarrow&\llbracket0,\infty\rrbracket\\
\pi&\longmapsto&\pi(B),
\end{matrix}\]
for $B$ Borel subset of $\mathbb{L}\times\mathbb{R}_+^*$.
By construction, the Poisson process $\Pi$ has the following invariance property: for any graph automorphism $\phi:\mathbb{T}\rightarrow\mathbb{T}$, we have
\[\phi_*\Pi:=\left\{(\phi(\ell),v)\,;\,(\ell,v)\in\Pi\right\}\overset{\text{\tiny law}}{=}\Pi.\]
Moreover, the Poisson process $\Pi$ is mixing: if $(\psi_n)_{n\in\mathbb{N}}$ is a sequence of graph automorphisms of $\mathbb{T}$ such that $d(\varnothing,\psi_n(\varnothing))\rightarrow\infty$ as $n\to\infty$, then for every bounded measurable functions $f$ and $g$ from $\mathbb{M}$ to $\mathbb{R}$, we have
\[\mathbb{E}[f(\Pi)\cdot g(\psi_n^*\Pi)]\longrightarrow\mathbb{E}[f(\Pi)]\cdot\mathbb{E}[g(\Pi)]\quad\text{as $n\to\infty$,}\]
where $\psi_n^*\Pi=\{(\psi_n(\ell),v)\,;\,(\ell,v)\in\Pi\}$.
In particular, any invariant event has probability $0$ or $1$.

\paragraph{Construction of the metric $T$.}

Now, let us construct the driving distance metric $T$ induced by $\Pi$ more precisely.
For every $x,y\in\mathbb{T}$, we let 
\[T(x,y)=V_{e_1}^{-1}+\ldots+V_{e_n}^{-1},\]
where $e_1,\ldots,e_n$ denote the edges on the geodesic path between $x$ and $y$ in $\mathbb{T}$, and where $V_e$ denotes the speed of the fastest road of $\Pi$ that passes through $e$, for each edge $e$ of $\mathbb{T}$.
More generally, for every $x\neq y\in\mathbb{T}$, we denote by $V_{x,y}$ the speed of the fastest road of $\Pi$ that passes through both points $x$ and $y$.
We have
\[\mathbb{P}(V_{x,y}<v)=\mathbb{P}(\Pi(\langle x,y\rangle\times[v,\infty[)=0)=\exp\left[-2^{-d(x,y)}\cdot v^{-(\beta-1)}\right]\quad\text{for all $v\in\mathbb{R}_+^*$.}\]
In particular, the random variables $(1/V_e,\,\text{$e$ edge of $\mathbb{T}$})$ are well-defined, with values in $\mathbb{R}_+^*$.
It follows that almost surely, the function $T:\mathbb{T}\times\mathbb{T}\rightarrow\mathbb{R}_+$ is a metric on $\mathbb{T}$.
Equivalently, this driving distance metric is the first passage percolation distance function associated with the passage times $\left(1/V_e,\,\text{$e$ edge of $\mathbb{T}$}\right)$.
By the Harris--FKG inequality for Poisson processes (see, e.g, \cite[Theorem 20.4]{lastpenrose}), these passage times are positively associated, as nondecreasing functions of $\Pi$.
For future reference, note that we have
\begin{equation}\label{eq:tau}
T(x,y)=\tau(\Pi;x,y)\quad\text{for all $x,y\in\mathbb{T}$}
\end{equation}
for some measurable function $\tau:\mathbb{M}\times(\mathbb{T}\times\mathbb{T})\rightarrow\mathbb{R}_+$.

\subsection{Explosion and the greedy process}\label{subsec:explosion}

In this subsection, we prove that for $\beta<2$, we have $T(\varnothing,\partial\mathbb{T})<\infty$ almost surely.
Following Pemantle and Peres (see \cite[proof of Theorem 3]{pemantleperes}), we consider the greedy process $(X_n)_{n\in\mathbb{N}}$ on $\mathbb{T}$ which starts at the root and follows the fastest road at each step.
More precisely, let $X_0=\varnothing$, and for each $n\in\mathbb{N}$, let $X_{n+1}$ be the child $X_ni$ of $X_n$ with minimal label $i$ (to break ties) that minimises the passage time $T(X_n,X_ni)=V_{X_n,X_ni}^{-1}$.
The following proposition tells us that for $\beta<2$, this process reaches the boundary of $\mathbb{T}$ in finite time a.s.

\begin{prop}\label{prop:greedy}
The greedy process undergoes a phase transition at $\beta=2$.
\begin{itemize}
\item For $\beta<2$, we have $\sum_{n\geq1}T(X_{n-1},X_n)<\infty$ almost surely.
\item For $\beta\geq2$, we have $\sum_{n\geq1}T(X_{n-1},X_n)=\infty$ almost surely.
\end{itemize}
\end{prop}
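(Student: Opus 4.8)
The plan is to reduce the asymptotics of $\sum_{n\ge1}T(X_{n-1},X_n)$ to those of the running maximum of an i.i.d.\ sequence, from which the threshold $\beta=2$ is read off by a first-moment computation (supplemented by a records argument for the critical regime). \textbf{First, an exact recursion for the speeds.} Write $V_n:=V_{X_{n-1},X_n}$, so that $T(X_{n-1},X_n)=1/V_n$. For a non-root vertex $v$, let $G(v):=V_{v1,v2}$ be the speed of the fastest road of $\Pi$ turning at $v$, i.e.\ passing through both children of $v$; recall $\mathbb{P}(G(v)<s)=\exp[-2^{-2}s^{-(\beta-1)}]$ since $d(v1,v2)=2$. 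I claim that
\[V_{n+1}=\max\bigl(V_n,\,G(X_n)\bigr)\qquad\text{for every }n\ge1.\]
Indeed, a road through the edge $\{X_n,X_{n+1}\}$ must, at $X_n$, use exactly one of the other two edges at $X_n$: either the edge $\{X_{n-1},X_n\}$, in which case it passes through $\{X_{n-1},X_n\}$ and so has speed $\le V_n$; or the edge from $X_n$ to the sibling of $X_{n+1}$, in which case it turns at $X_n$ and so has speed $\le G(X_n)$. If $G(X_n)\ge V_n$, then through either child of $X_n$ the fastest road has speed $G(X_n)$, so $V_{n+1}=G(X_n)$; if $G(X_n)<V_n$, then the road realising $V_n$ (which continues past $X_n$ into one of its two subtrees) is strictly fastest through the corresponding edge, so the greedy rule makes $X_{n+1}$ that child and $V_{n+1}=V_n$. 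Either way the identity holds; in particular $(V_n)_{n\ge1}$ is non-decreasing and $V_{n+1}=\max\bigl(V_1,G(X_1),\dots,G(X_n)\bigr)$.

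\textbf{Second, the increments are fresh.} Explore the greedy path step by step. The $\sigma$-algebra $\mathcal{F}_n$ of all information needed to determine $X_0,\dots,X_n$ (and hence $V_1,\dots,V_n$ and $G(X_1),\dots,G(X_{n-1})$) is a function of $\Pi$ restricted to the lines that contain at least one of the vertices $X_0,\dots,X_{n-1}$. On the other hand $G(X_n)$ is a function of $\Pi$ restricted to the set of lines turning at $X_n$, and every such line lies in the open subtree below $X_n$: it contains $X_n$ and two of its descendants but not the parent $X_{n-1}$, hence none of $X_0,\dots,X_{n-1}$. Conditioning on the value of the path, these two line-sets are disjoint, so the corresponding restrictions of $\Pi$ are independent; as the law $\exp[-2^{-2}s^{-(\beta-1)}]$ of $G(X_n)$ does not depend on $X_n$, we conclude that $G(X_n)$ is independent of $\mathcal{F}_n$ with that law. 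Hence $(G(X_n))_{n\ge1}$ is i.i.d., independent of $V_1$.

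\textbf{Third, the running maximum.} Put $G_i:=G(X_i)$, $\bar F(v):=\mathbb{P}(G_i>v)=1-\exp[-\tfrac14 v^{-(\beta-1)}]\sim\tfrac14 v^{-(\beta-1)}$ as $v\to\infty$, and $M_m:=\max(G_1,\dots,G_m)$. Since $V_n=\max(V_1,M_{n-1})$ coincides with $M_{n-1}$ for all large $n$, the proposition reduces to showing $\sum_m 1/M_m<\infty$ a.s.\ if $\beta<2$, and $=\infty$ a.s.\ if $\beta\ge2$. For $\beta<2$ this is immediate from $\mathbb{E}[1/M_m]=\int_0^\infty\exp[-\tfrac m4 t^{\beta-1}]\,\mathrm dt=c_\beta\, m^{-1/(\beta-1)}$ together with $1/(\beta-1)>1$. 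For $\beta\ge2$, let $\rho_j$ and $R_j$ be the $j$-th record value and record time of $(G_i)_{i\ge1}$; classically $-\log\bar F(\rho_j)$ is a sum of $j$ i.i.d.\ $\mathrm{Exp}(1)$ variables, so $\rho_j\to\infty$, and conditionally on $(\rho_j)_j$ the gaps $\Delta_j:=R_{j+1}-R_j$ are independent with $\Delta_j$ geometric of parameter $\bar F(\rho_j)$. Then $\sum_m 1/M_m=\sum_j\Delta_j/\rho_j$, while $\bigl(\rho_j\bar F(\rho_j)\bigr)^{-1}\to 4$ if $\beta=2$ and $\to\infty$ if $\beta>2$, hence is $\ge2$ for $j$ large. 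The conditionally independent events $\{\Delta_j\ge\bar F(\rho_j)^{-1}\}$ have conditional probability bounded below (a geometric variable of parameter $p$ exceeds $1/p$ with probability $\to e^{-1}$ as $p\to0$), so their sum diverges, and Borel--Cantelli gives $\Delta_j/\rho_j\ge2$ infinitely often a.s.; therefore $\sum_j\Delta_j/\rho_j=\infty$ a.s.

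\textbf{The main obstacle.} The heart of the argument is the structural analysis of the greedy process in the first two steps: the exact identity $V_{n+1}=\max(V_n,G(X_n))$ and, above all, the verification that the $G(X_n)$ remain i.i.d.\ and independent of the past even though the greedy path is itself random — the key being that the roads turning at $X_n$ form a portion of $\Pi$ lying in the as-yet-unexplored subtree below $X_n$. Once the problem is reduced to the running maximum of an i.i.d.\ sequence with a Fréchet-type tail of index $\beta-1$, the rest is routine, and the threshold is exactly the value $\beta=2$ at which $\sum_m\mathbb{E}[1/M_m]=c_\beta\sum_m m^{-1/(\beta-1)}$ ceases to converge.
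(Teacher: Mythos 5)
Your proposal is correct. Its structural core --- the identity $V_{n+1}=\max(V_n,G(X_n))$ together with the observation that the ``turning'' speeds $G(X_n)$ are i.i.d.\ with law $\exp[-\tfrac14 s^{-(\beta-1)}]$ and independent of the explored past, because the lines realising $G(X_n)$ live entirely in the as-yet-unexplored subtree below $X_n$ --- is exactly the paper's reduction, there phrased in terms of passage times as $T(X_n,X_{n+1})=T(X_{n-1},X_n)\wedge W_{n+1}$ with $W_{n+1}=V_{X_n1,X_n2}^{-1}$. (One presentational caveat: your $\mathcal{F}_n$ is generated by the restriction of $\Pi$ to a \emph{random} set of lines; the clean way to run the conditioning, as the paper does, is to restrict to lines hitting the deterministic ball $[\mathbb{T},\varnothing]_{n-1}$ and to sum over the possible values of $X_n$.) Where you genuinely diverge is in settling the nature of $\sum_m 1/M_m$ for the running maximum of an i.i.d.\ Fr\'echet-type sequence. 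The paper normalises $Y_n=n^{1/(\beta-1)}\cdot W_1\wedge\cdots\wedge W_n$, notes that the $Y_n$ are identically distributed and integrable, and invokes Jeulin's lemma to get both directions at once. You prove the convergent direction ($\beta<2$) by the first-moment computation $\mathbb{E}[1/M_m]=c_\beta m^{-1/(\beta-1)}$ (which is the easy half of Jeulin's lemma), and you replace the hard, divergence half by a self-contained record-times argument: writing $\sum_m 1/M_m=\sum_j\Delta_j/\rho_j$ with $\Delta_j$ conditionally geometric of parameter $\bar F(\rho_j)$, and using that $\bigl(\rho_j\bar F(\rho_j)\bigr)^{-1}$ tends to $4$ when $\beta=2$ and to $\infty$ when $\beta>2$, a conditional Borel--Cantelli argument forces $\Delta_j/\rho_j\geq2$ infinitely often, hence divergence. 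This buys an elementary, citation-free treatment of the critical and supercritical cases at the cost of a page of classical record theory; the paper's route is shorter but uses Jeulin's lemma as a black box. Both arguments are sound and identify the same threshold $\beta=2$.
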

\begin{proof}
By the definition of $(X_n)_{n\in\mathbb{N}}$, we have $T(X_n,X_{n+1})=T(X_{n-1},X_n)\wedge V_{X_n1,X_n2}^{-1}$ for each $n\in\mathbb{N}^*$.
Therefore, we have
\[T(X_{n-1},X_n)=T(X_0,X_1)\wedge W_2\wedge\ldots\wedge W_n\quad\text{for all $n\in\mathbb{N}^*$,}\]
where $W_n=V_{X_{n-1}1,X_{n-1}2}^{-1}$ for all $n\in\mathbb{N}^*$.
In particular, the sum $\sum_{n\geq1}T(X_{n-1},X_n)$ has the same nature as ${\sum_{n\geq1}W_1\wedge\ldots\wedge W_n}$.
To conclude the proof, let us show that almost surely, the last sum has the same nature as $\sum_{n\geq1}n^{-1/(\beta-1)}$.
First, notice that the random variables $(W_n)_{n\in\mathbb{N}^*}$ are independent and identically distributed.
Indeed, for each $n\in\mathbb{N}^*$, let $\mathcal{F}_n$ be the $\sigma$-algebra generated by the restriction of $\Pi$ to the set of roads that hit $[\mathbb{T},\varnothing]_{n-1}=\{x\in\mathbb{T}:d(\varnothing,x)\leq n-1\}$.
On the one hand, the random variables $X_n$ and $W_n$ are $\mathcal{F}_n$-measurable.
On the other hand, the random variable $W_{n+1}$ is independent of $\mathcal{F}_n$, and distributed as $V_{1,2}^{-1}$.
Indeed, for every $t\in\mathbb{R}_+^*$, we have
\[\begin{split}
\mathbb{P}(W_{n+1}>t\,|\,\mathcal{F}_n)&=\sum_{x\in\partial[\mathbb{T},\varnothing]_n}\mathbb{P}\left(X_n=x\,;\,V_{x1,x2}^{-1}>t\,\middle|\,\mathcal{F}_n\right)\\
&=\sum_{x\in\partial[\mathbb{T},\varnothing]_n}\mathbf{1}(X_n=x)\cdot\mathbb{P}\left(V_{x1,x2}^{-1}>t\right)=\exp\left[-1/4\cdot t^{\beta-1}\right].
\end{split}\]
Next, for every $n\in\mathbb{N}^*$, let $Y_n=n^{1/(\beta-1)}\cdot W_1\wedge\ldots\wedge W_n$.
We have
\[\mathbb{P}(Y_n>t)=\mathbb{P}\left(W_1>n^{-1/(\beta-1)}\cdot t\right)^n=\exp\left[-1/4\cdot t^{\beta-1}\right]\quad\text{for all $t\in\mathbb{R}_+^*$;}\]
hence, the $(Y_n)_{n\in\mathbb{N}^*}$ are identically distributed random variables, with values in $\mathbb{R}_+^*$.
Moreover, we have $\mathbb{E}[Y_1]<\infty$.
The result claimed above now follows from the Jeulin lemma (see \cite[Theorem 3.1 and Proposition 3.2]{jeulin}): almost surely, the sum
\[\sum_{n\geq1}W_1\wedge\ldots\wedge W_n=\sum_{n\geq1}\frac{Y_n}{n^{1/(\beta-1)}}\]
has the same nature as $\sum_{n\geq1}n^{-1/(\beta-1)}$.
\end{proof}

\subsection{Non-explosion and the bounded driving distance probability}\label{subsec:nonexplosive}

In this subsection, we consider the phase $\beta>2$.
We first prove that there is no explosion in this phase.
Then, we study the so-called bounded driving distance probability; namely, the probability $\mathbb{P}(T(\varnothing,1_n)\leq t)$ that the driving distance between two vertices at distance $n$ in $\mathbb{T}$ is at most $t$, for fixed $t>0$ and as $n\to\infty$.

\subsubsection{Non-explosion}\label{subsubsec:noexplosion}

Now, we prove that for $\beta>2$, we have $T(\varnothing,\partial\mathbb{T})=\infty$ almost surely.
First, consider the following easy lemma.

\begin{lem}
If there exists $t>0$ such that the driving distance ball $\{x\in\mathbb{T}:T(\varnothing,x)\leq t\}$ is finite a.s, then we have $T(\varnothing,\partial\mathbb{T})=\infty$ almost surely.
\end{lem}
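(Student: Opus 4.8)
The plan is to argue by contraposition together with a Borel–Cantelli / first-moment bound along rays. Suppose there exists $t>0$ such that the ball $B_t=\{x\in\mathbb{T}:T(\varnothing,x)\leq t\}$ is finite almost surely. The goal is to upgrade this to: for \emph{every} $t>0$, the ball $B_t$ is finite a.s., and then to conclude that $T(\varnothing,\partial\mathbb{T})=\infty$ a.s. The link between the two statements is that $T(\varnothing,\partial\mathbb{T})<\infty$ means precisely that there is a ray $\xi=(\varnothing=x_0,x_1,\ldots)$ with $\sum_{n\geq1}T(x_{n-1},x_n)<\infty$; if this sum is finite and equal to some value $<s$, then every vertex $x_n$ on $\xi$ lies in $B_s$, so $B_s$ contains an entire ray and is in particular infinite. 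Hence $\{T(\varnothing,\partial\mathbb{T})<\infty\}\subseteq\bigcup_{s\in\mathbb{N}}\{B_s\text{ is infinite}\}$, and it suffices to show $\mathbb{P}(B_s\text{ infinite})=0$ for all $s>0$.

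\textbf{From one value of $t$ to all values.} The first step is to promote finiteness of $B_t$ for one $t$ to finiteness of $B_s$ for every $s$. I would do this by a subadditivity/concatenation argument exploiting the invariance of $\Pi$. Fix $s>0$ and choose $k\in\mathbb{N}$ with $s\leq kt$. If $x\in B_s$, then the geodesic $\llbracket\varnothing,x\rrbracket$ can be cut into $k$ successive pieces, on at least one of which the accumulated passage time is $\leq t$; pushing the root to the start of that piece by a graph automorphism and using $\phi_*\Pi\overset{\text{law}}{=}\Pi$, each piece lies in a translated copy of a set distributed as $B_t$. More carefully, $B_s\subseteq\bigcup$ (over vertices $w$ at distance $\le s$-worth of steps, i.e.\ over the finitely-branching tree) of translates of $B_t$; since a.s.\ $B_t$ is finite, an inductive argument on $k$ shows $B_s$ is a.s.\ finite as well. (Alternatively one can invoke that $T(\varnothing,x)\le T(\varnothing,w)+T(w,x)$ and that $\{x:T(w,x)\le t\}$ is distributed as $B_t$ by invariance, so $B_{s}\subseteq\bigcup_{w\in B_{(k-1)t/k\cdot s}} (w+B_{t})$, closing the induction.)

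\textbf{From finiteness of all balls to non-explosion.} Given that $B_s$ is a.s.\ finite for every $s\in\mathbb{N}$, the inclusion displayed above gives $\mathbb{P}(T(\varnothing,\partial\mathbb{T})<\infty)\le\sum_{s\in\mathbb{N}}\mathbb{P}(B_s\text{ infinite})=0$, which is exactly the claim $T(\varnothing,\partial\mathbb{T})=\infty$ a.s.

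\textbf{The main obstacle} is the first step: turning ``$B_t$ finite a.s.'' into ``$B_s$ finite a.s.'' cleanly. The subtlety is that when I cut a geodesic into $k$ pieces and translate, the translated processes are each distributed as $\Pi$ but they are \emph{not independent} of one another (a single line of $\Pi$ can contribute to several pieces), so I cannot simply multiply probabilities. The fix is that I do not need independence: I only need that a countable union of a.s.-finite sets is a.s.\ finite, which is automatic, provided I can bound $B_s$ by a union indexed over an \emph{a.s.-finite} set of translates of $B_t$. Setting this up as an induction on $k$ — $B_{jt}$ is contained in $\bigcup_{w\in B_{(j-1)t}}(w+B_t')$ where $B_t'$ has the law of $B_t$ by invariance of $\Pi$ under the automorphism sending $\varnothing$ to $w$ — makes each stage a finite (a.s.) union of finite (a.s.) sets, hence finite a.s.; this is the step that needs to be written carefully but presents no real difficulty once the invariance is invoked correctly.
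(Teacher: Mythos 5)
Your overall strategy is sound but takes a longer route than the paper's, and the one step you flag as the main obstacle contains a genuine (though repairable) error that is not the independence issue you discuss. The displayed inclusion $B_{jt}\subseteq\bigcup_{w\in B_{(j-1)t}}(w+B_t')$ is false as stated: passage times are additive along the unique geodesic, and a single very slow edge can straddle the cut. Concretely, with $j=2$ and edge passage times $t/2$ and $3t/2$ along a two-edge geodesic $\varnothing,y_1,y_2$, the endpoint $y_2$ lies in $B_{2t}$, yet no vertex $w$ satisfies both $T(\varnothing,w)\leq t$ and $T(w,y_2)\leq t$. The fix is cheap: take $w$ to be the last vertex on $\llbracket\varnothing,x\rrbracket$ with $T(\varnothing,w)\leq(j-1)t$; then either $w=x$, or its successor $w^+$ satisfies $T(w^+,x)<jt-(j-1)t=t$, so $B_{jt}$ is contained in the union of $w^++B_t$ with $w^+$ ranging over $B_{(j-1)t}$ \emph{together with its neighbours} --- still an a.s.\ finite index set, so your induction closes. (Your point that the translated copies need not be independent, and that only ``a.s.\ finite union of a.s.\ finite sets'' is needed, is correct.)

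That said, the paper avoids the entire upgrade from one $t$ to all $s$ with a shorter observation: if a ray $(\varnothing=x_0,x_1,\ldots)$ explodes, then the tail $\sum_{n>N}T(x_{n-1},x_n)$ is $\leq t$ for $N$ large, for the \emph{given} $t$ from the hypothesis; hence some vertex $x_N$ admits an infinite geodesic path of total passage time $\leq t$ emanating from it, which forces the driving distance ball of radius $t$ centred at $x_N$ to be infinite. By invariance this event has the same probability as the corresponding event at the root, namely $0$, and a countable union over all vertices of $\mathbb{T}$ finishes the proof. Your argument, once patched as above, is a valid alternative, but it does strictly more work to reach the same conclusion.
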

\begin{proof}
Let $t>0$ be such that $\#\{x\in\mathbb{T}:T(\varnothing,x)\leq t\}<\infty$ almost surely.
For every $x\in\mathbb{T}$, let $A_x$ be the event: ``there exists an infinite geodesic path $(x=x_0,x_1,\ldots)$ in $\mathbb{T}$ such that ${\sum_{n\geq1}T(x_{n-1},x_n)\leq t}$''.
Notice that the explosion event $(T(\varnothing,\partial\mathbb{T})<\infty)$ is contained in $\bigcup_{x\in\mathbb{T}}A_x$.
On the other hand, we have $\mathbb{P}(A_x)=\mathbb{P}(A_\varnothing)=0$ for all $x\in\mathbb{T}$, where the first equality holds by invariance, and the second by assumption.
Finally, we obtain $\mathbb{P}(T(\varnothing,\partial\mathbb{T})<\infty)=0$.
\end{proof}

By the previous lemma, it suffices to prove that for $t$ small enough, the driving distance ball $\{x\in\mathbb{T}:T(\varnothing,x)\leq t\}$ is finite almost surely.
This is the crux of the proof.

\begin{prop}\label{prop:noexplosion}
For $\beta>2$, there exists $t>0$ such that
\[\mathbb{E}[\#\{x\in\mathbb{T}:T(\varnothing,x)\leq t\}]<\infty.\]
\end{prop}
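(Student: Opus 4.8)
The goal is to show that for $\beta>2$, the expected size of the driving-distance ball of radius $t$ is finite, provided $t$ is chosen small enough. The natural route is a first-moment computation: by linearity of expectation,
\[
\mathbb{E}[\#\{x\in\mathbb{T}:T(\varnothing,x)\leq t\}]=\sum_{x\in\mathbb{T}}\mathbb{P}(T(\varnothing,x)\leq t)=\sum_{n\geq0}\#\partial[\mathbb{T},\varnothing]_n\cdot\mathbb{P}(T(\varnothing,1_n)\leq t),
\]
using that $\#\partial[\mathbb{T},\varnothing]_n=3\cdot2^{n-1}$ for $n\geq1$ and that $\mathbb{P}(T(\varnothing,x)\leq t)$ depends only on $d(\varnothing,x)=n$ by invariance. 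So everything reduces to showing that $\mathbb{P}(T(\varnothing,1_n)\leq t)$ decays faster than $2^{-n}$ (uniformly, with room to spare) once $t$ is small. Writing $T(\varnothing,1_n)=\sum_{k=1}^{n}1/V_{e_k}$ for the edges $e_1,\dots,e_n$ on the geodesic, the event $\{T(\varnothing,1_n)\leq t\}$ forces all $n$ passage times $1/V_{e_k}$ to be small simultaneously, which is a large-deviation-type event.

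**Key steps.** First I would produce a clean tail bound for a single passage time: from the formula in the excerpt, for an edge $e$ on the ray, $\mathbb{P}(1/V_e\leq s)=\mathbb{P}(V_e\geq 1/s)=1-\exp[-c_e\,s^{\beta-1}]$ where $c_e=\mu\langle x,y\rangle=2^{-d(x,y)}$ for the endpoints $x,y$ of $e$. Crucially these $c_e$ are \emph{not} bounded below: an edge at depth $d$ has $c_e$ as large as a constant but can also be as small as $2^{-(2d+1)}$ if it joins two far-apart descendants. Wait — here $e$ lies on the geodesic from $\varnothing$ to $1_n$, so its endpoints are consecutive vertices $1_{k-1},1_k$ at distance $1$, giving $c_e=1/2$; hence $\mathbb{P}(1/V_{e_k}\leq s)=1-e^{-s^{\beta-1}/2}\leq s^{\beta-1}/2$. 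The passage times $1/V_{e_1},\dots,1/V_{e_n}$ are \emph{not} independent — a single long road can traverse many of these edges — but they are positively associated (stated in the excerpt), and for a first-moment \emph{upper} bound on $\mathbb{P}(T(\varnothing,1_n)\leq t)$ positive association works in our favor: since each $\{1/V_{e_k}\leq s_k\}$ is a decreasing event in $\Pi$, Harris--FKG gives $\mathbb{P}(\bigcap_k\{1/V_{e_k}\leq s_k\})\geq\prod_k\mathbb{P}(1/V_{e_k}\leq s_k)$, which is the wrong direction. So instead I would reveal the roads ray-by-ray and show the correlations are harmless, or — cleaner — bound directly using the Laplace transform: $\mathbb{P}(T(\varnothing,1_n)\leq t)\leq e^{\lambda t}\,\mathbb{E}[e^{-\lambda T(\varnothing,1_n)}]=e^{\lambda t}\,\mathbb{E}[\prod_{k=1}^{n}e^{-\lambda/V_{e_k}}]$ for any $\lambda>0$, and then the product over edges can be controlled by a multiplicative-functional computation for the Poisson process $\Pi$, which \emph{decouples} across roads exactly. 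Concretely, $\mathbb{E}[\prod_{k}e^{-\lambda/V_{e_k}}]$ is an exponential functional of $\Pi$ and equals $\exp[-\int(1-\prod_{k:\ell\ni e_k}\text{(stuff)})\,\mathrm{d}\nu]$; the point is that a road of speed $v$ covering a contiguous block of $j$ of the edges $e_1,\dots,e_n$ contributes a factor involving $e^{-j\lambda/v}$, and summing the Lévy-type integral over all such blocks and speeds yields a bound of the form $\mathbb{E}[e^{-\lambda T(\varnothing,1_n)}]\leq(g(\lambda))^{n}$ for an explicit $g(\lambda)\to0$ as $\lambda\to\infty$ when $\beta>2$ (here the exponent $\beta>2$ is exactly what makes $\int_0^\infty(1-e^{-\lambda/v})v^{-\beta}\,\mathrm{d}v=O(\lambda^{\beta-1})$ with $\beta-1>1$ grow fast enough). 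Then choose $\lambda$ large so that $2\,e^{\lambda t}g(\lambda)<1$, i.e. pick $t$ small depending on $\lambda$, and the series $\sum_n 3\cdot2^{n-1}e^{\lambda t}g(\lambda)^n$ converges.

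**Main obstacle.** The delicate point is the dependence between the passage times along the ray caused by long-range roads: a single road may serve as the fastest road on a long run of consecutive edges, so $T(\varnothing,1_n)$ is not a sum of independent increments. The Laplace-transform / multiplicative-functional approach handles this, but requires carefully bounding the Poisson integral $\int_{\mathbb{L}\times\mathbb{R}_+^*}\bigl(1-\exp[-\lambda\sum_{k:\,\ell\supset e_k}1/v]\bigr)\,\mathrm{d}\nu(\ell,v)$ — one must organize the lines hitting the geodesic $\llbracket\varnothing,1_n\rrbracket$ by which sub-segment they cover, use $\mu\langle x,y\rangle=2^{-d(x,y)}$ to weight them, and verify that the geometric decay $2^{-d}$ in $\mu$ together with $\beta>2$ beats the branching factor $2$. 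Getting a factorized bound $(g(\lambda))^n$ with $g(\lambda)$ genuinely tending to $0$ — rather than merely $\leq C^n$ — is the crux; the condition $\beta>2$ enters precisely because $1/(\beta-1)<1$ makes the greedy sum $\sum n^{-1/(\beta-1)}$ diverge (Proposition \ref{prop:greedy}), and the same arithmetic reappears here as the requirement that the single-road contribution integrates to something small at high $\lambda$. I would expect to spend most of the effort making this Poisson functional estimate clean, and I would structure it so that the combinatorics of "how many ways a road covers a block of $j$ edges" is reduced to the identity in \eqref{eq:munorm}.
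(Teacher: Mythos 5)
Your reduction to $\sum_{n}3\cdot 2^{n-1}\,\mathbb{P}(T(\varnothing,1_n)\leq t)$ is legitimate (and the paper's proof does imply this series converges), but the route you propose for bounding $\mathbb{P}(T(\varnothing,1_n)\leq t)$ has a genuine gap, in two places. First, the identity $\mathbb{E}\bigl[\prod_{k}e^{-\lambda/V_{e_k}}\bigr]=\exp\bigl[-\int(1-\cdots)\,\mathrm{d}\nu\bigr]$ is not available: the exponential formula for Poisson functionals applies to products over the \emph{atoms} of $\Pi$, whereas $1/V_{e_k}$ is $1/\max$ of the speeds of the roads through $e_k$, which is neither additive nor multiplicative in the atoms. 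Second, and more seriously, the hoped-for bound $\mathbb{E}[e^{-\lambda T(\varnothing,1_n)}]\leq g(\lambda)^n$ with $g(\lambda)\to0$ is \emph{false} for every $\beta$. On the event that a single road of speed $v_0=\lambda n$ traverses all of $e_1,\ldots,e_n$ (probability $\asymp 2^{-n}(\lambda n)^{-(\beta-1)}$ by \eqref{eq:munorm}), one has $T(\varnothing,1_n)\leq n/v_0$, whence
\[\mathbb{E}\left[e^{-\lambda T(\varnothing,1_n)}\right]\;\geq\;c\cdot 2^{-n}\cdot(\lambda n)^{-(\beta-1)},\qquad\text{so}\qquad\liminf_{n\to\infty}\mathbb{E}\left[e^{-\lambda T(\varnothing,1_n)}\right]^{1/n}\geq\frac{1}{2}\quad\text{for every }\lambda.\]
Thus $2^n e^{\lambda t}g(\lambda)^n$ never decays geometrically: you sit exactly at the critical rate $1/2$ and must extract the polynomial factor $(\lambda n)^{-(\beta-1)}$ with $\lambda$ of order $n/t$, not a fixed large $\lambda$. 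This is precisely what the paper's Proposition \ref{prop:kahn} does (a Kahn-style iteration over which road is fastest on which block of edges), and the price is a correction factor $\exp\bigl[\sum_{k<n}(k+1)k^{-(\beta-1)}t^{\beta-1}\bigr]$ that stays bounded only for $\beta>3$; for $2<\beta\leq3$ it grows superpolynomially and the vertex-by-vertex first-moment bound does not close. The paper acknowledges this failure explicitly and instead proves Proposition \ref{prop:noexplosion} by an entirely different mechanism: a recursion on the expected ball counts $\varphi_n^*$ and $\varphi_n(v)$, obtained by palming off the fastest road through the root via the Slivnyak--Mecke theorem and decomposing the tree along that road. If you want to salvage your plan, you would need either that recursion or a substantially sharper control of the covering combinatorics than the Chernoff bound can provide.
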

\begin{proof}
We keep denoting by $[\mathbb{T},\varnothing]_n$ the set of vertices within graph distance $n$ from the root.
More generally, for $S\subset\mathbb{T}$ and for $x\in\mathbb{T}$, we let $[S,x]_n=\{y\in S:d(x,y)\leq n\}$.
Now, let $t\in{]0,1/9]}$ be a parameter to be adjusted later, and let
\[\varphi_n^*=\mathbb{E}[\#\{x\in[\mathbb{T},\varnothing]_n:T(\varnothing,x)\leq t\}]\quad\text{for all $n\in\mathbb{N}$.}\]
To start working on the $\varphi_n^*$ terms, we would like to integrate on the speed of the fastest road of $\Pi$ that passes through the root.
A rigorous way of doing that is to use the Slivnyak--Mecke theorem, that we recall now.
For $(\ell,v)\in\mathbb{L}\times\mathbb{R}_+^*$ and $\pi\in\mathbb{M}$, we denote by $(\ell,v)\oplus\pi$ (resp. $\pi\ominus(\ell,v)$) the atomic measure obtained from $\pi$ by adding (resp. removing) the atom $(\ell,v)$.
The Slivnyak--Mecke theorem (see, e.g, \cite[Theorem 4.1]{lastpenrose}) states that for every measurable function $f:\left(\mathbb{L}\times\mathbb{R}_+^*\right)\times\mathbb{M}\rightarrow\mathbb{R}_+$, we have
\[\mathbb{E}\left[\sum_{(\ell,v)\in\Pi}f(\ell,v;\Pi)\right]=\int_{\mathbb{L}\times\mathbb{R}_+^*}\mathbb{E}[f(\ell,v;(\ell,v)\oplus\Pi)]\mathrm{d}\nu(\ell,v).\]
Equivalently, for every measurable function $g:\left(\mathbb{L}\times\mathbb{R}_+^*\right)\times\mathbb{M}\rightarrow\mathbb{R}_+$, we have
\[\mathbb{E}\left[\sum_{(\ell,v)\in\Pi}g(\ell,v;\Pi\ominus(\ell,v))\right]=\int_{\mathbb{L}\times\mathbb{R}_+^*}\mathbb{E}[g(\ell,v;\Pi)]\mathrm{d}\nu(\ell,v).\]
Now, consider the following lemma.
For every $x\in\mathbb{T}$, we denote by $(L_x,V_x)$ the fastest road of $\Pi$ that passes through $x$.

\begin{lem}\label{lem:mecke'}
Let $x\in\mathbb{T}$.
For every measurable function $F:\left(\mathbb{L}\times\mathbb{R}_+^*\right)\times\mathbb{M}\rightarrow\mathbb{R}_+$, we have
\[\mathbb{E}[F(L_x,V_x;\Pi)]=\int_{\langle x\rangle\times\mathbb{R}_+^*}\mathbb{E}[F(\ell,v;(\ell,v)\oplus\Pi)\,;\,V_x<v]\mathrm{d}\nu(\ell,v).\]
Equivalently, for every measurable function $G:\left(\mathbb{L}\times\mathbb{R}_+^*\right)\times\mathbb{M}\rightarrow\mathbb{R}_+$, we have
\[\mathbb{E}[G(L_x,V_x;\Pi\ominus(L_x,V_x))]=\int_{\langle x\rangle\times\mathbb{R}_+^*}\mathbb{E}[G(\ell,v;\Pi)\,;\,V_x<v]\mathrm{d}\nu(\ell,v).\]
\end{lem}
\begin{proof}[Proof of the lemma]
Let us prove the first identity only, the second one is an immediate consequence.
Let $F:\left(\mathbb{L}\times\mathbb{R}_+^*\right)\times\mathbb{M}\rightarrow\mathbb{R}_+$ be a measurable function.
We have
\[F(L_x,V_x;\Pi)=\sum_{(\ell,v)\in\Pi}g(\ell,v;\Pi\ominus(\ell,v)),\]
where $g:\left(\mathbb{L}\times\mathbb{R}_+^*\right)\times\mathbb{M}\rightarrow\mathbb{R}_+$ is the measurable function defined by
\[g(\ell,v;\pi)=\begin{cases}
F(\ell,v;(\ell,v)\oplus\pi)&\text{if $\ell\in\langle x\rangle$ and $\pi(\langle x\rangle\times[v,\infty[)=0$}\\
0&\text{otherwise}
\end{cases}\]
for all $(\ell,v;\pi)\in\left(\mathbb{L}\times\mathbb{R}_+^*\right)\times\mathbb{M}$.
By the Slivnyak--Mecke theorem, it follows that
\[\mathbb{E}[F(L_x,V_x;\Pi)]=\int_{\mathbb{L}\times\mathbb{R}_+^*}\mathbb{E}[g(\ell,v;\Pi)]\mathrm{d}\nu(\ell,v)=\int_{\langle x\rangle\times\mathbb{R}_+^*}\mathbb{E}[F(\ell,v;(\ell,v)\oplus\Pi)\,;\,V_x<v]\mathrm{d}\nu(\ell,v).\]
\end{proof}

Back to the proof of the proposition, we apply Lemma \ref{lem:mecke'} with $x=\varnothing$ and
\[F(\ell,v;\pi)=\#\{y\in[\mathbb{T},\varnothing]_n:\tau(\pi;\varnothing,y)\leq t\}\quad\text{for all $(\ell,v;\pi)\in\left(\mathbb{L}\times\mathbb{R}_+^*\right)$,}\]
where $\tau$ is the measurable function of \eqref{eq:tau}.
We obtain
\[\varphi_n^*=\int_{\langle\varnothing\rangle\times\mathbb{R}_+^*}\mathbb{E}[\#\{y\in[\mathbb{T},\varnothing]_n:\tau((\ell,v)\oplus\Pi;\varnothing,y)\leq t\}\,;\,V_\varnothing<v]\mathrm{d}\nu(\ell,v).\]
Now, let us slightly abuse notation, and denote the integrand by $\varphi_n(v)$, as it does not depend on $\ell$.
Indeed, by invariance, we have
\begin{eqnarray*}
\varphi_n(v)&=&\mathbb{E}[\#\{y\in[\mathbb{T},\varnothing]_n:\tau((\ell,v)\oplus\Pi;\varnothing,y)\leq t\}\,;\,V_\varnothing<v]\\
&=&\mathbb{E}\left[\#\left\{y\in\left[\mathbb{T},x'\right]_n:\tau\left(\left(\ell',v\right)\oplus\Pi;x',y\right)\leq t\right\}\,;\,V_{x'}<v\right]
\end{eqnarray*}
for every $\ell'\in\mathbb{L}$, and for every $x'\in\ell'$.
Note that
\[\varphi_n(v)=\mathbb{P}(V_\varnothing<v)=\exp\left[-3/4\cdot v^{-(\beta-1)}\right]\quad\text{for all $v\in\left]0,\frac{1}{t}\right[$.}\]
Furthermore, we claim that for every $n\in\mathbb{N}$, we have
\begin{equation}\label{eq:recurrence}
\varphi_{n+1}(v)\leq1+\varphi_n^*+2\cdot t\cdot v\cdot\left(\varphi_n^*+\int_{\langle\varnothing\rangle\times{]v,\infty[}}\varphi_n\left(v'\right)\mathrm{d}\nu\left(\ell',v'\right)\right)\quad\text{for all $v\in\left[\frac{1}{t},\infty\right[$.}
\end{equation}
First, assuming that this holds, let us complete the proof of the proposition.
Since $\beta>2$, we can adjust the parameter $t$ so that $\int_{\langle\varnothing\rangle\times{]1/t,\infty[}}v\mathrm{d}\nu(\ell,v)\leq1$.
Now, let us prove by induction that for every $n\in\mathbb{N}$, we have $\varphi_n(v)\leq v$ for all $v\in[1/t,\infty[$.
This is obviously true for $n=0$, since $\varphi_0(v)=\mathbb{P}(V_\varnothing<v)$ for all $v\in\mathbb{R}_+^*$.
Next, let $n\in\mathbb{N}$, assume that $\varphi_n(v)\leq v$ for all $v\in[1/t,\infty[$, and let us prove that $\varphi_{n+1}(v)\leq v$ for all $v\in[1/t,\infty[$.
First, note that
\[\varphi_n^*=\int_{\langle\varnothing\rangle\times\mathbb{R}_+^*}\varphi_n(v)\mathrm{d}\nu(\ell,v)\leq\int_{\langle\varnothing\rangle\times\mathbb{R}_+^*}\mathbb{P}(V_\varnothing<v)\mathrm{d}\nu(\ell,v)+\int_{\langle\varnothing\rangle\times{]1/t,\infty[}}v\mathrm{d}\nu(\ell,v)\leq2,\]
where we use Lemma \ref{lem:mecke'} with $x=\varnothing$ and $F\equiv1$ to check that $\int_{\langle\varnothing\rangle\times\mathbb{R}_+^*}\mathbb{P}(V_\varnothing<v)\mathrm{d}\nu(\ell,v)=1$.
Now, it follows from \eqref{eq:recurrence} that for every $v\in[1/t,\infty[$, we have
\[\varphi_{n+1}(v)\leq1+2+2\cdot t\cdot v\cdot\left(2+\int_{\langle\varnothing\rangle\times{]1/t,\infty[}}v'\mathrm{d}\nu\left(\ell',v'\right)\right)\leq3+6\cdot t\cdot v\leq v,\]
where the last inequality holds since $t\leq1/9$.
By induction, this proves that for every $n\in\mathbb{N}$, we have $\varphi_n(v)\leq v$ for all $v\in[1/t,\infty[$.
In turn, this implies that for every $n\in\mathbb{N}$, we have
\[\varphi_n^*=\int_{\langle\varnothing\rangle\times\mathbb{R}_+^*}\varphi_n(v)\mathrm{d}\nu(\ell,v)\leq\int_{\langle\varnothing\rangle\times\mathbb{R}_+^*}\mathbb{P}(V_\varnothing<v)\mathrm{d}\nu(\ell,v)+\int_{\langle\varnothing\rangle\times{]1/t,\infty[}}v\mathrm{d}\nu(\ell,v)\leq2.\]
Letting $n\to\infty$ in the definition of $\varphi_n^*$, we obtain
\[\mathbb{E}[\#\{x\in\mathbb{T}:T(\varnothing,x)\leq t\}]\leq 2\]
by the monotone convergence theorem, which gives the result of the proposition.

To complete the proof, it remains to establish \eqref{eq:recurrence}.
Let $n\in\mathbb{N}$, and fix $(\ell,v)\in\langle\varnothing\rangle\times[1/t,\infty[$.
We denote by $\rho_1$ and $\rho_2$ the two neighbours of $\varnothing$ that are on $\ell$, and by $\rho_3$ the neighbour of $\varnothing$ that is not on $\ell$.
For each $i\in\{1,2,3\}$, we let $S_i=\{x\in\mathbb{T}:x\succeq\rho_i\}$.
See Figure \ref{fig:noexplosion1} for an illustration.

\begin{figure}[ht]
\centering
\includegraphics[width=0.5\linewidth]{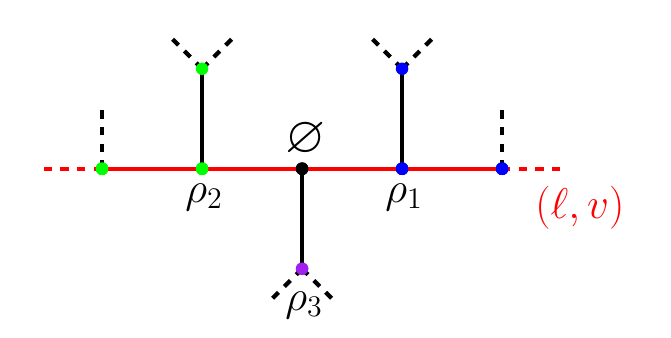}
\caption{The vertices of $S_1$ are in blue, the vertices of $S_2$ are in green, and the vertices of $S_3$ are in purple.}\label{fig:noexplosion1}
\end{figure}

We have
\begin{eqnarray*}
\lefteqn{\#\{x\in[\mathbb{T},\varnothing]_{n+1}:\tau((\ell,v)\oplus\Pi;\varnothing,x)\leq t\}}\\
&\leq&1+\sum_{i=1}^2\#\{x\in[S_i,\rho_i]_n:\tau((\ell,v)\oplus\Pi;\varnothing,x)\leq t\}+\#\left\{x\in[S_3,\rho_3]_n:\tau(\Pi;\rho_3,x)\leq t\right\}\\
&\leq&1+\sum_{i=1}^2\#\{x\in[S_i,\rho_i]_n:\tau((\ell,v)\oplus\Pi;\varnothing,x)\leq t\}+\#\left\{x\in[\mathbb{T},\rho_3]_n:\tau(\Pi;\rho_3,x)\leq t\right\}.
\end{eqnarray*}
It follows that
\begin{eqnarray*}
\lefteqn{\mathbb{E}[\#\{x\in[\mathbb{T},\varnothing]_{n+1}:\tau((\ell,v)\oplus\Pi;\varnothing,x)\leq t\}\,;\,V_\varnothing<v]}\\
&\leq&1+\sum_{i=1}^2\mathbb{E}[\#\{x\in[S_i,\rho_i]_n:\tau((\ell,v)\oplus\Pi;\varnothing,x)\leq t\}\,;\,V_\varnothing<v]\\
&&+\mathbb{E}[\#\{x\in[\mathbb{T},\rho_3]_n:\tau(\Pi;\rho_3,x)\leq t\}],
\end{eqnarray*}
where we recognise $\mathbb{E}[\#\{x\in[\mathbb{T},\rho_3]_n:\tau(\Pi;\rho_3,x)\leq t\}]=\varphi_n^*$, and it remains to handle the sum of two terms.
Since the two terms are equal by invariance, let us focus on the first of them.
To prove \eqref{eq:recurrence}, it suffices to show that
\[\mathbb{E}[\#\{x\in[S_1,\rho_1]_n:\tau((\ell,v)\oplus\Pi;\varnothing,x)\leq t\}\,;\,V_\varnothing<v]\leq t\cdot v\cdot\left(\varphi_n^*+\int_{\langle\varnothing\rangle\times{]v,\infty[}}\varphi_n\left(v'\right)\mathrm{d}\nu\left(\ell',v'\right)\right).\]
Let us denote by $x_1,x_2,\ldots$ the vertices of $S_1$ that are on $\ell$, with $d(\varnothing,x_i)=i$ for all $i\in\mathbb{N}^*$.
In particular, we have $x_1=\rho_1$.
For each $i\in\mathbb{N}^*$, we denote by $x'_i$ the neighbour of $x_i$ that is not on $\ell$, and we let $S'_i=\{x_i\}\cup\left\{x\in\mathbb{T}:x\succeq x'_i\right\}$.
Moreover, we let $S''_i=\{x\in\mathbb{T}:x\succeq x_i\}$.
See Figure \ref{fig:noexplosion2} for an illustration.

\begin{figure}[ht]
\centering
\includegraphics[width=0.6\linewidth]{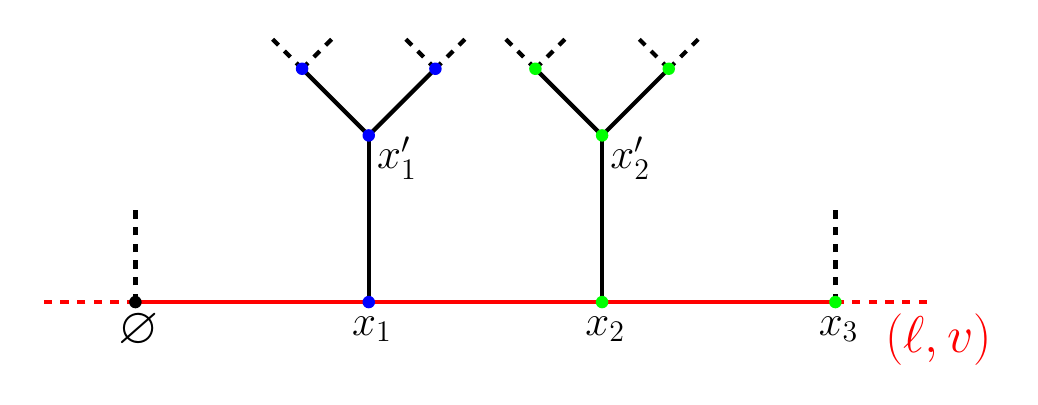}
\caption{The vertices of $S'_1$ are in blue, and the vertices of $S''_2$ are in green.}\label{fig:noexplosion2}
\end{figure}

Let $k=\lfloor t\cdot v\rfloor$ be the largest integer $i\in\mathbb{N}^*$ such that $d(\varnothing,x_i)\leq t\cdot v$.
We claim that
\begin{eqnarray*}
\lefteqn{\mathbb{E}[\#\{x\in[S_1,\rho_1]_n:\tau((\ell,v)\oplus\Pi;\varnothing,x)\leq t\}\,;\,V_\varnothing<v]}\\
&\leq&\sum_{i=1}^k\mathbb{E}\left[\#\{x\in[\mathbb{T},x_i]_n:\tau(\Pi;x_i,x)\leq t\}\right]\\
&&+\sum_{j=1}^k\mathbb{E}\left[\#\left\{x\in\left[\mathbb{T},x_j\right]_n:\tau\left(\left(\ell,V_{x_j}\right)\oplus\Pi\ominus\left(L_{x_j},V_{x_j}\right);x_j,x\right)\leq t\right\}\,;\,V_{x_j}>v\right].
\end{eqnarray*}
Indeed, on the event $(V_\varnothing<v)$, we have the following alternative.
\begin{itemize}
\item If no road of $\Pi$ with speed more than $v$ hits $\llbracket x_1,x_k\rrbracket$, then $(\ell,v)$ is the fastest road of $(\ell,v)\oplus\Pi$ that passes through each edge on the geodesic path between $\varnothing$ and $x_{k+1}$ in $\mathbb{T}$.
Therefore, we have $\tau((\ell,v)\oplus\Pi;\varnothing,x_{k+1})=(k+1)/v>t$, and it follows that
\[\begin{split}
\#\{x\in[S_1,\rho_1]_n:\tau((\ell,v)\oplus\Pi;\varnothing,x)\leq t\}&\leq\sum_{i=1}^k\#\left\{x\in\left[S'_i,x_i\right]_n:\tau(\Pi;x_i,x)\leq t\right\}\\
&\leq\sum_{i=1}^k\#\{x\in[\mathbb{T},x_i]_n:\tau(\Pi;x_i,x)\leq t\}.
\end{split}\]

\item Otherwise, let $j$ be the smallest integer $i\in\llbracket1,k\rrbracket$ such that $V_{x_i}>v$.
We have
\begin{eqnarray*}
\lefteqn{\#\{x\in[S_1,\rho_1]_n:\tau((\ell,v)\oplus\Pi;\varnothing,x)\leq t\}}\\
&\leq&\sum_{i=1}^{j-1}\#\left\{x\in\left[S'_i,x_i\right]_n:\tau(\Pi;x_i,x)\leq t\right\}+\#\left\{x\in\left[S''_j,x_j\right]_n:\tau((\ell,v)\oplus\Pi;x_j,x)\leq t\right\}\\
&\leq&\sum_{i=1}^{j-1}\#\{x\in[\mathbb{T},x_i]_n:\tau(\Pi;x_i,x)\leq t\}+\#\left\{x\in\left[S''_j,x_j\right]_n:\tau((\ell,v)\oplus\Pi;x_j,x)\leq t\right\}.
\end{eqnarray*}
To bound the last term, consider the fastest road $\left(L_{x_j},V_{x_j}\right)$ of $\Pi$ that passes through $x_j$.
Let $l$ be the largest integer $i\in\llbracket j+1,\infty\llbracket$ such that $L_{x_j}$ passes through $x_i$.
See Figure \ref{fig:noexplosion3} for an illustration.

\begin{figure}[ht]
\centering
\includegraphics[width=0.55\linewidth]{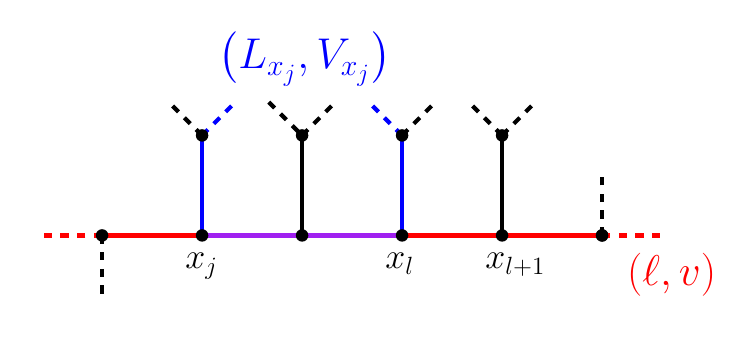}
\caption{The intersection of $\ell$ and $L_{x_j}$, which corresponds to the segment $\left\llbracket x_j,x_l\right\rrbracket$, is in purple.}\label{fig:noexplosion3}
\end{figure}

We decompose
\begin{eqnarray*}
\lefteqn{\#\left\{x\in\left[S''_j,x_j\right]_n:\tau((\ell,v)\oplus\Pi;x_j,x)\leq t\right\}}\\
&\leq&\#\left\{x\in\left[S'_j\cup\ldots\cup S'_l,x_j\right]_n:\tau(\Pi;x_j,x)\leq t\right\}\\
&&+\#\left\{x\in\left[S''_{l+1},x_j\right]_n:\tau((\ell,v)\oplus\Pi;x_j,x)\leq t\right\}\\
&\leq&\#\{x\in[\mathbb{T},x_j]_n:\tau(\Pi;x_j,x)\leq t\}\\
&&+\#\left\{x\in\left[S''_{l+1},x_j\right]_n:\tau((\ell,v)\oplus\Pi;x_j,x)\leq t\right\},
\end{eqnarray*}
where the first inequality holds since $\left(L_{x_j},V_{x_j}\right)$ is the fastest road of $(\ell,v)\oplus\Pi$ that passes through each edge on the geodesic path between $x_j$ and $x_l$.
For the last term, since $V_{x_j}>v$, we have
\begin{eqnarray*}
\lefteqn{\#\left\{x\in\left[S''_{l+1},x_j\right]_n:\tau((\ell,v)\oplus\Pi;x_j,x)\leq t\right\}}\\
&\leq&\#\left\{x\in\left[S''_{l+1},x_j\right]_n:\tau\left(\left(\ell,V_{x_j}\right)\oplus\Pi;x_j,x\right)\leq t\right\}\\
&=&\#\left\{x\in\left[S''_{l+1},x_j\right]_n:\tau\left(\left(\ell,V_{x_j}\right)\oplus\Pi\ominus\left(L_{x_j},V_{x_j}\right);x_j,x\right)\leq t\right\}\\
&\leq&\#\left\{x\in[\mathbb{T},x_j]_n:\tau\left(\left(\ell,V_{x_j}\right)\oplus\Pi\ominus\left(L_{x_j},V_{x_j}\right);x_j,x\right)\leq t\right\}.
\end{eqnarray*}
Note that we can even add the indicator $\mathbf{1}\left(V_{x_j}>v\right)$ in the right hand side.
Altogether, we obtain
\begin{eqnarray*}
\lefteqn{\#\{x\in[S_1,\rho_1]_n:\tau((\ell,v)\oplus\Pi;\varnothing,x)\leq t\}}\\
&\leq&\sum_{i=1}^{j-1}\#\{x\in[\mathbb{T},x_i]_n:\tau(\Pi;x_i,x)\leq t\}+\#\{x\in[\mathbb{T},x_j]_n:\tau(\Pi;x_j,x)\leq t\}\\
&&+\#\left\{x\in[\mathbb{T},x_j]_n:\tau\left(\left(\ell,V_{x_j}\right)\oplus\Pi\ominus\left(L_{x_j},V_{x_j}\right);x_j,x\right)\leq t\right\}\cdot\mathbf{1}\left(V_{x_j}>v\right).
\end{eqnarray*}
\end{itemize}
In any case, we get
\begin{eqnarray*}
\lefteqn{\#\{x\in[S_1,\rho_1]_n:\tau((\ell,v)\oplus\Pi;\varnothing,x)\leq t\}}\\
&\leq&\sum_{i=1}^k\#\{x\in[\mathbb{T},x_i]_n:\tau(\Pi;x_i,x)\leq t\}\\
&&+\sum_{j=1}^k\#\left\{x\in\left[\mathbb{T},x_j\right]_n:\tau\left(\left(\ell,V_{x_j}\right)\oplus\Pi\ominus\left(L_{x_j},V_{x_j}\right);x_j,x\right)\leq t\right\}\cdot\mathbf{1}\left(V_{x_j}>v\right).
\end{eqnarray*}
Recall that this holds on the event $(V_\varnothing<v)$.
The inequality claimed above follows by taking expectations:
\begin{eqnarray*}
\lefteqn{\mathbb{E}[\#\{x\in[S_1,\rho_1]_n:\tau((\ell,v)\oplus\Pi;\varnothing,x)\leq t\}\,;\,V_\varnothing<v]}\\
&\leq&\sum_{i=1}^k\mathbb{E}\left[\#\{x\in[\mathbb{T},x_i]_n:\tau(\Pi;x_i,x)\leq t\}\right]\\
&&+\sum_{j=1}^k\mathbb{E}\left[\#\left\{x\in\left[\mathbb{T},x_j\right]_n:\tau\left(\left(\ell,V_{x_j}\right)\oplus\Pi\ominus\left(L_{x_j},V_{x_j}\right);x_j,x\right)\leq t\right\}\,;\,V_{x_j}>v\right].
\end{eqnarray*}
Now, for each summand in the first term, we recognise $\mathbb{E}\left[\#\{x\in[\mathbb{T},x_i]_n:\tau(\Pi;x_i,x)\leq t\}\right]=\varphi_n^*$.
Next, for each summand in the second term, we use Lemma \ref{lem:mecke'} with $x=x_j$ and
\[G\left(\ell',v';\pi\right)=\begin{cases}
\#\left\{y\in\left[\mathbb{T},x_j\right]_n:\tau\left(\left(\ell,v'\right)\oplus\pi;x_j,y\right)\leq t\right\}&\text{if $v'>v$}\\
0&\text{otherwise}
\end{cases}\]
for all $\left(\ell',v';\pi\right)\in\left(\mathbb{L}\times\mathbb{R}_+^*\right)\times\mathbb{M}$.
We get
\begin{eqnarray*}
\lefteqn{\mathbb{E}\left[\#\left\{x\in\left[\mathbb{T},x_j\right]_n:\tau\left(\left(\ell,V_{x_j}\right)\oplus\Pi\ominus\left(L_{x_j},V_{x_j}\right);x_j,x\right)\leq t\right\}\,;\,V_{x_j}>v\right]}\\
&=&\int_{\langle x_j\rangle\times{]v,\infty[}}\mathbb{E}\left[\#\left\{x\in\left[\mathbb{T},x_j\right]_n:\tau\left(\left(\ell,v'\right)\oplus\Pi;x_j,x\right)\leq t\right\}\,;\,V_{x_j}<v'\right]\mathrm{d}\nu\left(\ell',v'\right),
\end{eqnarray*}
and we recognise $\varphi_n(v')$ as the integrand.
Altogether, we obtain
\begin{eqnarray*}
\lefteqn{\mathbb{E}[\#\{x\in[S_1,\rho_1]_n:\tau((\ell,v)\oplus\Pi;\varnothing,x)\leq t\}\,;\,V_\varnothing<v]}\\
&\leq&\sum_{i=1}^k\varphi_n^*+\sum_{j=1}^k\int_{\langle x_j\rangle\times{]v,\infty[}}\varphi_n\left(v'\right)\mathrm{d}\nu\left(\ell',v'\right)\\
&=&k\cdot\left(\varphi_n^*+\int_{\langle\varnothing\rangle\times{]v,\infty[}}\varphi_n\left(v'\right)\mathrm{d}\nu\left(\ell',v'\right)\right)\\
&\leq&t\cdot v\cdot\left(\varphi_n^*+\int_{\langle\varnothing\rangle\times{]v,\infty[}}\varphi_n\left(v'\right)\mathrm{d}\nu\left(\ell',v'\right)\right).
\end{eqnarray*}
This completes the proof of \eqref{eq:recurrence}, and concludes the proof of the proposition.
\end{proof}

\subsubsection{The bounded driving distance probability}\label{subsubsec:bddp}

In this paragraph, we study the so-called bounded driving distance probability; namely, the probability $\mathbb{P}(T(\varnothing,1_n)\leq t)$ that the driving distance between two points at distance $n$ in $\mathbb{T}$ is at most $t$, for fixed $t>0$ and as $n\to\infty$.
Note that we have the obvious lower bound
\[\mathbb{P}(T(\varnothing,1_n)\leq t)=\mathbb{P}\left(V_{\varnothing,1_n}\geq\frac{n}{t}\right)=1-\exp\left[-2^{-n}\cdot\left(\frac{t}{n}\right)^{\beta-1}\right],\]
which yields
\begin{equation}\label{eq:obviouslowerbound}
\mathbb{P}(T(\varnothing,1_n)\leq t)\geq(1+o(1))\cdot2^{-n}\cdot\left(\frac{t}{n}\right)^{\beta-1}\quad\text{as $n\to\infty$.}
\end{equation}
In the other direction, we prove the following inequality.

\begin{prop}\label{prop:kahn}
For every $n\in\mathbb{N}^*$, and for every $t>0$, we have
\begin{equation}\label{eq:kahn}
\mathbb{P}(T(\varnothing,1_n)\leq t)\leq2^{-n}\cdot\left(\frac{t}{n}\right)^{\beta-1}\cdot\exp\left[\sum_{k=1}^{n-1}\frac{k+1}{k^{\beta-1}}\cdot t^{\beta-1}\right].
\end{equation}
\end{prop}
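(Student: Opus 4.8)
The plan is to prove \eqref{eq:kahn} by induction on $n$. Write $e_1,\dots,e_n$ for the edges of $\llbracket\varnothing,1_n\rrbracket$, so that $T(\varnothing,1_n)=\sum_{k=1}^{n}V_{e_k}^{-1}$, and set $q_n(t)=\mathbb{P}(T(\varnothing,1_n)\leq t)$. The base case $n=1$ is immediate: $q_1(t)=\mathbb{P}(V_{e_1}\geq 1/t)=1-\exp[-2^{-1}t^{\beta-1}]\leq 2^{-1}t^{\beta-1}$, which is exactly \eqref{eq:kahn} for $n=1$. For the inductive step I would establish the recursive inequality
\[
q_n(t)\ \leq\ \frac12\left(\frac{n-1}{n}\right)^{\beta-1}\exp\!\left(\frac{n\,t^{\beta-1}}{(n-1)^{\beta-1}}\right)q_{n-1}(t)\qquad(n\geq2);
\]
iterating it from the base case, and using the telescoping identity $\prod_{m=2}^{n}\frac{m-1}{m}=\frac1n$ together with the reindexing $\sum_{m=2}^{n}\frac{m}{(m-1)^{\beta-1}}=\sum_{k=1}^{n-1}\frac{k+1}{k^{\beta-1}}$, reproduces precisely the right-hand side of \eqref{eq:kahn}.

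To prove the recursion I would reveal the fastest road of $\Pi$ through the whole path $\llbracket\varnothing,1_n\rrbracket$, i.e.\ the fastest road through the set $\langle\varnothing,1_n\rangle$, by a Slivnyak--Mecke computation in the spirit of Lemma \ref{lem:mecke'} (applied with $\langle\varnothing,1_n\rangle$ in place of $\langle x\rangle$). When one adds such a road $(\ell,v)$, every edge $e_1,\dots,e_n$ receives speed at least $v$, so $T((\ell,v)\oplus\Pi;\varnothing,1_n)\leq n/v$; hence the contribution of the range $v\geq n/t$ is at most
\[
\int_{\langle\varnothing,1_n\rangle\times[n/t,\infty[}\exp\!\left(-2^{-n}v^{-(\beta-1)}\right)\mathrm{d}\nu(\ell,v)\ \leq\ \nu\big(\langle\varnothing,1_n\rangle\times[n/t,\infty[\big)=2^{-n}(t/n)^{\beta-1},
\]
which is the single-road main term (heuristically $2^{-n}(t/n)^{\beta-1}\approx \tfrac12(\tfrac{n-1}{n})^{\beta-1}q_{n-1}(t)$ since one expects $q_{n-1}(t)\approx 2^{-(n-1)}(t/(n-1))^{\beta-1}$). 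In the complementary range $v<n/t$ the added road alone cannot do the job, so on the relevant event one needs further roads of $\Pi$ — each necessarily covering only a \emph{proper} sub-segment of the path, since by construction there is none through all of $\langle\varnothing,1_n\rangle$ with speed $\geq v$ — to cover the remaining edges. The key structural point is that a road covering a sub-segment $\llbracket 1_a,1_b\rrbracket$ leaves the path at its two endpoints into branches of $\mathbb{T}$ disjoint from the rest of the path, which lets one iterate. Summing over the length $k=b-a$ of such a sub-segment, each contributes through the mass $\nu(\langle 1_a,1_b\rangle\times[k/t,\infty[)=2^{-k}(t/k)^{\beta-1}$ and the roughly $k+1$ combinatorial choices involved, and these — together with the inductive hypothesis applied to shorter paths — assemble into the correction factor $\exp(\tfrac{n\,t^{\beta-1}}{(n-1)^{\beta-1}})$.

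The main obstacle is the dependence structure: a single fast road can traverse many consecutive edges of the path, so the passage times $V_{e_1}^{-1},\dots,V_{e_n}^{-1}$ are positively associated and one cannot simply multiply single-edge probabilities (that would give the far too small bound $\sim t^{n(\beta-1)}$ instead of $\sim t^{\beta-1}$). The whole point of \eqref{eq:kahn} is to quantify how much the long roads inflate $\mathbb{P}(T(\varnothing,1_n)\leq t)$ above the single-road probability $2^{-n}(t/n)^{\beta-1}$, and the delicate part is to carry out the Slivnyak--Mecke bookkeeping — peeling roads in an order dictated by the tree geometry so that what remains is a genuinely fresh, smaller instance of the same problem — without the union bounds over the possible road configurations being too lossy.
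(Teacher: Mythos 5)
Your overall strategy---peel off the fastest road, which on the event $\{T(\varnothing,1_n)\leq t\}$ must have speed at least $n/t$, isolate the single-road main term $2^{-n}(t/n)^{\beta-1}$, and account for roads covering sub-segments via the $(k+1)$-fold combinatorial count and the mass $2^{-k}(t/k)^{\beta-1}$---is exactly the right one, and is the one the paper uses. But the recursion you actually state, $q_n(t)\leq\frac12\left(\frac{n-1}{n}\right)^{\beta-1}\exp\bigl(\frac{n\,t^{\beta-1}}{(n-1)^{\beta-1}}\bigr)q_{n-1}(t)$, is asserted rather than proved, and the sketch you give for it has a structural problem that I do not think can be repaired within your framework. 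An induction indexed only by the length of a \emph{contiguous} path cannot close: once you remove a road covering an interior sub-segment $\llbracket 1_a,1_b\rrbracket$, the edges that remain form a \emph{disconnected} subset of $\{e_1,\ldots,e_n\}$, and the joint law of $(V_e)$ over a disconnected edge set is not that of a contiguous path of the same cardinality (a single road can no longer couple edges across the gap in the same way). So the inductive hypothesis ``$q_m(t)\leq B_m$ for contiguous paths'' does not apply to the remaining instance, and your phrase ``together with the inductive hypothesis applied to shorter paths'' papers over precisely the hard point. A secondary issue is the matching of the main term: bounding $2^{-n}(t/n)^{\beta-1}$ by $\frac12\left(\frac{n-1}{n}\right)^{\beta-1}q_{n-1}(t)$ requires a \emph{lower} bound $q_{n-1}(t)\geq 2^{-(n-1)}(t/(n-1))^{\beta-1}$, which is false (the obvious lower bound is $1-e^{-x}<x$); the exponential slack can absorb the discrepancy, but this has to be checked, and you only gesture at it with an approximation sign. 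Finally, you never specify what time budget the sub-instances receive, nor why the peeled road is independent of what remains.

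The paper's proof resolves all of this by enlarging the class of objects in the recursion: it defines $a(E)=\mathbb{P}\bigl(\sum_{e\in E}1/V_e\leq t\bigr)$ for \emph{arbitrary} subsets $E\subset\{e_1,\ldots,e_n\}$ (always with the full budget $t$, which is what makes the recursion closed), and proves
\[
a(E)\leq\sum_{F}\mathbb{P}\left(V_F\geq\frac{\#E}{t}\right)\cdot a(E\setminus F)
\]
over non-empty $F\subset E$ connected in $E$. The independence needed to factor the probability comes from observing that the fastest road through every edge of $F$ and no edge of $E\setminus F$ is a function of a set of roads disjoint from those determining $(V_e)_{e\in E\setminus F}$. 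Only after iterating this down to $\emptyset$ does one do the combinatorics: grouping chains $E_0\supsetneq\cdots\supsetneq E_j=\emptyset$ by the cardinalities $k_i=\#E_i$, using the count $(k_{i+1}+1)$ of admissible $E_{i+1}$, and bounding the resulting sum by $\sum_j\frac{1}{(j-1)!}\bigl(\sum_k\frac{k+1}{k^{\beta-1}}t^{\beta-1}\bigr)^{j-1}=\exp[\cdots]$. If you want to keep a genuinely one-dimensional induction, you would have to peel roads from one end of the path (the road through $e_1$ covers a prefix, leaving a contiguous suffix), but then the ``fastest road has speed $\geq\#E/t$'' step and the budget bookkeeping change, and you would need to redo the estimates; as written, your argument has a gap at its central step.
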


This estimate is similar in spirit to \cite[Proposition 2.3]{moa}, which in turn was inspired by Kahn's proof of \cite[Theorem 5.1]{kahn}.

\begin{rem}
For $\beta>3$, since $\sum_{k\geq1}(k+1)\cdot k^{-(\beta-1)}<\infty$, we obtain
\[\mathbb{P}(T(\varnothing,1_n)\leq t)=O\left(2^{-n}\cdot\left(\frac{t}{n}\right)^{\beta-1}\right)\quad\text{as $n\to\infty$.}\]
This matches the order of magnitude of the obvious lower bound \eqref{eq:obviouslowerbound}.
Moreover, the estimate \eqref{eq:kahn} provides an alternative proof of the fact that the driving distance ball $\{x\in\mathbb{T}:T(\varnothing,x)\leq t\}$ is finite a.s.~for $\gamma\geq3$, by a first moment argument.
\end{rem}

\begin{proof}[Proof of Proposition \ref{prop:kahn}]
Fix $n\in\mathbb{N}^*$ and $t>0$.
For $k\in\llbracket1,n\rrbracket$, we denote by $e_k$ the edge between $1_{k-1}$ and $1_k$.
For every subset $E\subset\{e_1,\ldots,e_n\}$, let ${a(E)=\mathbb{P}\left(\sum_{e\in E}1/V_e\leq t\right)}$; and note that ${\mathbb{P}(T(\varnothing,1_n)\leq t)=a\{e_1,\ldots,e_n\}}$.
We claim that $a(\cdot)$ satisfies the following recursion: for every non-empty subset ${E\subset\{e_1,\ldots,e_n\}}$, we have
\begin{equation}\label{eq:reckahn}
a(E)\leq\sum_{\substack{\text{$F\subset E$ non-empty}\\\text{$F$ connected in $E$}}}\mathbb{P}\left(V_F\geq\frac{\#E}{t}\right)\cdot a(E\setminus F),
\end{equation}
where we denote by $V_F$ the speed of the fastest road that passes through every edge of $F$; and the sum is taken over all non-empty subsets $F\subset E$ which are \emph{connected in $E$}, in the sense that for every $i\leq j\in\llbracket1,n\rrbracket$ such that $e_i,e_j\in F$, we have $e_k\in F$ whenever $k\in\llbracket i,j\rrbracket$ is such that $e_k\in E$.
To prove \eqref{eq:reckahn}, let $E$ be a non-empty subset of $\{e_1,\ldots,e_n\}$.
On the event $\left(\sum_{e\in E}1/V_e\leq t\right)$, the fastest road of $\Pi$ that passes through at least one edge of $E$ must have speed at least $\#E/t$.
Denoting by $F$ the set of edges $e\in E$ that are traversed by this road, we obtain a non-empty subset $F\subset E$ which is connected in $E$, and such that $V_F^{E\setminus F}\geq\#E/t$, where $V_F^{E\setminus F}$ denotes the speed of the fastest road of $\Pi$ that passes through every edge of $F$ and no edge of $E\setminus F$.
This proves the inclusion
\[\left(\sum_{e\in E}\frac{1}{V_e}\leq t\right)\subset\bigcup_{\substack{\text{$F\subset E$ non-empty}\\\text{$F$ connected in $E$}}}\left(V_F^{E\setminus F}\geq\frac{\#E}{t}\,;\,\sum_{e\in E\setminus F}\frac{1}{V_e}\leq t\right).\]
By a union bound, this yields
\[a(E)\leq\sum_{\substack{\text{$F\subset E$ non-empty}\\\text{$F$ connected in $E$}}}\mathbb{P}\left(V_F^{E\setminus F}\geq\frac{\#E}{t}\,;\,\sum_{e\in E\setminus F}\frac{1}{V_e}\leq t\right).\]
Now, for each term in the sum, observe that the random variable $V_F^{E\setminus F}$ is independent of the random variables ${(V_e)_{e\in E\setminus F}}$.
Indeed, the former is measurable with respect to the restriction of $\Pi$ to the set of roads that pass through every edge of $F$ and no edge of $E\setminus F$, while the latter are measurable with respect to the restriction of $\Pi$ to the set of roads that pass through at least one edge of $E\setminus F$.
Thus, we obtain \eqref{eq:reckahn}:
\[\begin{split}
a(E)&\leq\sum_{\substack{\text{$F\subset E$ non-empty}\\\text{$F$ connected in $E$}}}\mathbb{P}\left(V_F^{E\setminus F}\geq\frac{\#E}{t}\right)\cdot\mathbb{P}\left(\sum_{e\in E\setminus F}\frac{1}{V_e}\leq t\right)\\
&\leq\sum_{\substack{\text{$F\subset E$ non-empty}\\\text{$F$ connected in $E$}}}\mathbb{P}\left(V_F\geq\frac{\#E}{t}\right)\cdot a(E\setminus F).
\end{split}\]
Upon reindexing the sum, we get
\[a(E)\leq\sum_{\substack{\text{$F\subsetneq E$}\\\text{$E\setminus F$ connected in $E$}}}\mathbb{P}\left(V_{E\setminus F}\geq\frac{\#E}{t}\right)\cdot a(F).\]
Since $a(\emptyset)=1$, iterating this inequality yields
\[a\{e_1,\ldots,e_n\}\leq\sum_{j=1}^n\sum_{\substack{\{e_1,\ldots,e_n\}=E_0\supsetneq\ldots\supsetneq E_j=\emptyset\\\text{$E_i\setminus E_{i+1}$ connected in $E_i$}}}\prod_{i=0}^{j-1}\mathbb{P}\left(V_{E_i\setminus E_{i+1}}\geq\frac{\#E_i}{t}\right).\]
Now, let us work on the summands above.
Using the inequality ${\mathbb{P}(\mathrm{Poisson}(\lambda)>0)\leq\lambda}$, we get
\[\begin{split}
\mathbb{P}\left(V_{E_i\setminus E_{i+1}}\geq\frac{\#E_i}{t}\right)&\leq\mu\{\ell\in\mathbb{L}:\text{$\ell$ passes through each edge of $E_i\setminus E_{i+1}$}\}\cdot\left(\frac{t}{\#E_i}\right)^{\beta-1}\\
&\leq2^{-\#(E_i\setminus E_{i+1})}\cdot\left(\frac{t}{\#E_i}\right)^{\beta-1}.
\end{split}\]
We deduce that
\[\prod_{i=0}^{j-1}\mathbb{P}\left(V_{E_i\setminus E_{i+1}}\geq\frac{\#E_i}{t}\right)\leq\prod_{i=0}^{j-1}\left(2^{-\#(E_i\setminus E_{i+1})}\cdot\left(\frac{t}{\#E_i}\right)^{\beta-1}\right)=2^{-n}\cdot\left(\frac{t}{n}\right)^{\beta-1}\cdot\prod_{i=1}^{j-1}\left(\frac{t}{\#E_i}\right)^{\beta-1}.\]
At this point, we have obtained
\begin{equation}\label{eq:propquickco}
\mathbb{P}(T(\varnothing,1_n)\leq t)\leq2^{-n}\cdot\left(\frac{t}{n}\right)^{\beta-1}\cdot\sum_{j=1}^n\sum_{\substack{\{e_1,\ldots,e_n\}=E_0\supsetneq\ldots\supsetneq E_j=\emptyset\\\text{$E_i\setminus E_{i+1}$ connected in $E_i$}}}\prod_{i=1}^{j-1}\left(\frac{t}{\#E_i}\right)^{\beta-1},
\end{equation}
and the remaining work is purely combinatorial.
For each $j\in\llbracket1,n\rrbracket$, grouping the terms according to $k_i=\#E_i$, we can compute exactly:
\[\begin{split}
\sum_{\substack{\{e_1,\ldots,e_n\}=E_0\supsetneq\ldots\supsetneq E_j=\emptyset\\\text{$E_i\setminus E_{i+1}$ connected in $E_i$}}}\prod_{i=1}^{j-1}\left(\frac{t}{\#E_i}\right)^{\beta-1}&=\sum_{n=k_0>\ldots>k_j=0}(k_1+1)\cdot\ldots\cdot(k_{j-1}+1)\cdot\prod_{i=1}^{j-1}\left(\frac{t}{k_i}\right)^{\beta-1}\\
&=\sum_{n=k_0>\ldots>k_j=0}\prod_{i=1}^{j-1}\left(\frac{k_i+1}{k_i^{\beta-1}}\cdot t^{\beta-1}\right).
\end{split}\]
Indeed, given any subset $E_i\subset\{e_1,\ldots,e_n\}$ with cardinality $k_i$ and any integer $k_{i+1}\in\llbracket1,k_i\llbracket$, there are $(k_{i+1}+1)$ ways of choosing a subset $E_{i+1}\subset E_i$ with cardinality $k_{i+1}$ such that $E_i\setminus E_{i+1}$ is connected in $E_i$.
The above equality leads to the upper bound:
\[\begin{split}
\sum_{\substack{\{e_1,\ldots,e_n\}=E_0\supsetneq\ldots\supsetneq E_j=\emptyset\\\text{$E_i\setminus E_{i+1}$ connected in $E_i$}}}\prod_{i=1}^{j-1}\left(\frac{t}{\#E_i}\right)^{\beta-1}&\leq\frac{1}{(j-1)!}\cdot\sum_{1\leq k_1,\ldots,k_{j-1}\leq n-1}~\prod_{i=1}^{j-1}\left(\frac{k_i+1}{k_i^{\beta-1}}\cdot t^{\beta-1}\right)\\
&=\frac{1}{(j-1)!}\cdot\left(\sum_{k=1}^{n-1}\frac{k+1}{k^{\beta-1}}\cdot t^{\beta-1}\right)^{j-1}.
\end{split}\]
Plugging this into \eqref{eq:propquickco} and summing over $j$, we obtain \eqref{eq:kahn}.
\end{proof}

\subsection{Open questions}\label{subsec:open}

To conclude this paper, let us state some natural open questions raised by our results.

\begin{itemize}
\item The proof of Theorem \ref{thm:explosion} falls short of describing what happens at $\beta=2$.
It might be the case that there is no explosion; but note that in contrast with the result of Proposition \ref{prop:noexplosion}, when $\beta=2$, for every $t>0$, we have
\[\begin{split}
\mathbb{E}\left[\#\{x\in\mathbb{T}:T(\varnothing,x)\leq t\}\right]&\geq\mathbb{E}\left[1+2\cdot\lfloor t\cdot V_\varnothing\rfloor\right]\\
&\geq\mathbb{E}[t\cdot V_\varnothing]\\
&=t\cdot\int_0^\infty\left(1-\exp\left[-3/4\cdot v^{-1}\right]\right)\mathrm{d}v=\infty.
\end{split}\]

\item The set
\[\left\{(x_n)_{n\in\mathbb{N}}\in\partial\mathbb{T}:\sum_{n\geq1}T(x_{n-1},x_n)<\infty\right\}\]
has measure $0$ a.s, with respect to the natural Borel measure on $\partial\mathbb{T}$ introduced in Section \ref{sec:mu}; on the other hand, this set must be dense in $\partial\mathbb{T}$ as soon as the explosion event ${(T(\varnothing,\partial\mathbb{T})<\infty)}$ is realised.
In that case, it would be interesting to compute its Hausdorff dimension, with respect to the distance $d$ on $\partial\mathbb{T}$ introduced in Section \ref{sec:mu}.

\item Although we fail to obtain a matching upper bound for $\beta\in{]2,3]}$, it seems plausible that the obvious lower bound \eqref{eq:obviouslowerbound} gives the right order of magnitude for the bounded driving distance probability in the whole phase $\beta>2$.

\item The results presented in this paper should hold more generally in the $d$-regular tree for all $d\geq2$.

\item We expect that a result similar to Theorem \ref{thm:explosion} holds for the driving distance problem in the hyperbolic plane.
We intend to investigate this in a forthcoming paper.
\end{itemize}

\bibliographystyle{siam}
\bibliography{biblio.bib}

\end{document}